\documentclass[a4paper]{scrartcl}

\usepackage{fullpage}

\usepackage{lmodern}
\usepackage[usenames,dvipsnames]{xcolor}

%
\usepackage{amsmath}%
\usepackage{amsfonts}%
\usepackage{amssymb}%
\usepackage{esint}
\usepackage{enumitem}
\usepackage{appendix}

\usepackage{mathtools}

\usepackage{authblk}

\usepackage[french,ngerman,british]{babel}


\usepackage[stable]{footmisc}
\usepackage{footnote}
\makesavenoteenv{align*}

\usepackage{amsthm}

\usepackage{aliascnt}

%
\newtheorem{theorem}{Theorem}
\theoremstyle{plain}

\newaliascnt{corollary}{theorem}  
\newtheorem{corollary}[corollary]{Corollary}
\aliascntresetthe{corollary}

\newaliascnt{lemma}{theorem}  
\newtheorem{lemma}[lemma]{Lemma}  
\aliascntresetthe{lemma}

\newaliascnt{proposition}{theorem}  
\newtheorem{proposition}[proposition]{Proposition}  
\aliascntresetthe{proposition}

\theoremstyle{definition}

\theoremstyle{remark}
\newtheorem*{note}{Note}
\newtheorem{remark}[theorem]{Remark}
%

%
\numberwithin{equation}{section}

%

\DeclareMathOperator*{\esssup}{ess\,sup}
\DeclareMathOperator*{\essinf}{ess\,inf}
%

%

\newcommand{\abs}[1]{\left|#1\right|}
\newcommand{\norm}[1]{\left\lVert#1\right\rVert}
\newcommand{\reals}{\ensuremath{\mathbb{R}}}
\newcommand{\complexes}{\ensuremath{\mathbb{C}}}
\newcommand{\tow}{\rightharpoonup}

\usepackage{filecontents}

\begin{filecontents*}{1DNC.bib}

@BOOK{rossi2011theory,
  AUTHOR = {Rossi, F.},
  ISBN = {9783642105562},
  PUBLISHER = {Springer Berlin Heidelberg},
  TITLE = {Theory of Semiconductor Quantum Devices: Microscopic Modeling and Simulation Strategies},
  YEAR = {2011},
}

@INCOLLECTION{Manfredi:1994kx,
  AUTHOR = {Manfredi, Giovanni and Feix, Marc},
  TITLE = {Transport equations for quantum plasmas},
  BOOKTITLE = {Advances in Kinetic Theory and Computing: Selected Papers},
  EDITOR = {Perthame, B.},
  PAGES = {109--140},
  PUBLISHER = {World Scientific},
  VOLUME = {22},
  YEAR = {1994},
}

@ARTICLE{Hartree:1928uq,
  AUTHOR = {Hartree,D. R.},
  ISSUE = {01},
  JOURNAL = {Math. Proc. Cambridge Philos. Soc.},
  MONTH = {1},
  NUMPAGES = {22},
  PAGES = {89--110},
  TITLE = {The Wave Mechanics of an Atom with a Non-{C}oulomb Central Field. {Part I. Theory} and Methods},
  VOLUME = {24},
  YEAR = {1928},
}

@ARTICLE{Lions19811245,
  AUTHOR = {Lions, P. L.},
  ISSN = {0362-546X},
  JOURNAL = {Nonlinear Anal.},
  KEYWORDS = {Nonlinear eigenvalue problems},
  NUMBER = {11},
  PAGES = {1245--1256},
  TITLE = {{Some remarks on Hartree equation}},
  VOLUME = {5},
  YEAR = {1981},
}

@ARTICLE{Coclite03solitarywaves,
  AUTHOR = {Giuseppe Maria Coclite and Vladimir Georgiev},
  JOURNAL = {Electron. J. Differential Equations},
  PAGES = {1--31},
  TITLE = {{Solitary waves for Maxwell-Schr\"odinger equations}},
  VOLUME = {2004},
  YEAR = {2003},
}

@ARTICLE{fortunato1998eigenvalue,
  AUTHOR = {Benci, Vieri and Fortunato, Donato},
  JOURNAL = {Topol. Methods Nonlinear Anal.},
  NUMBER = {2},
  PAGES = {283--293},
  TITLE = {{An eigenvalue problem for the Schr\"odinger--Maxwell equations}},
  VOLUME = {11},
  YEAR = {1998},
}

@ARTICLE{Lieb:1977uq,
  AUTHOR = {Lieb, E. H.},
  JOURNAL = {Stud. Appl. Math.},
  MONTH = {10},
  PAGES = {93-105},
  TITLE = {{Existence and uniqueness of the minimizing solution of Choquard's nonlinear equation}},
  VOLUME = {57},
  YEAR = {1977},
}

@ARTICLE{Choquard:2007fk,
  AUTHOR = {Choquard, Philippe and Stubbe, Joachim},
  JOURNAL = {Lett. Math. Phys.},
  NUMBER = {2},
  PAGES = {177--184},
  PUBLISHER = {Springer Netherlands},
  TITLE = {{The one-dimensional Schr\"odinger--Newton equations}},
  VOLUME = {81},
  YEAR = {2007},
}

@BOOK{reed1978analysis,
  AUTHOR = {Reed, M. and Simon, B.},
  ISBN = {9780125850049},
  PUBLISHER = {Academic Press},
  TITLE = {Methods of Modern Mathematical Physics, vol. 4, {A}nalysis of Operators},
  YEAR = {1978},
}

@ARTICLE{Liu:1969ly,
  AUTHOR = {Liu, Teng-Sun and Rooij, Arnoud Van},
  JOURNAL = {Math. Nachr.},
  NUMBER = {1-3},
  PAGES = {29--42},
  TITLE = {Sums and Intersections of Normed Linear Spaces},
  VOLUME = {42},
  YEAR = {1969},
}

@BOOK{Pietsch:2007zr,
  AUTHOR = {Pietsch, Albrecht},
  PUBLISHER = {Birkh\"auser},
  TITLE = {History of Banach Spaces and Linear Operators},
  YEAR = {2007},
}

@BOOK{lieb2001analysis,
  AUTHOR = {Lieb, E. H. and Loss, M.},
  EDITION = {2nd},
  PUBLISHER = {AMS},
  SERIES = {Graduate Studies in Mathematics},
  TITLE = {Analysis},
  VOLUME = {14},
  YEAR = {2001},
}

@BOOK{Ambrosetti:2007db,
  AUTHOR = {Ambrosetti, Antonio and Malchiodi, Andrea},
  PUBLISHER = {Cambridge University Press},
  TITLE = {Nonlinear Analysis and Semilinear Elliptic Problems},
  YEAR = {2007},
}

@BOOK{Korner:2004fk,
  AUTHOR = {K\"orner, T. W.},
  PUBLISHER = {AMS},
  SERIES = {Graduate Studies in Mathematics},
  TITLE = {A Companion to Analysis},
  VOLUME = {62},
  YEAR = {2004},
}

@ARTICLE{Lions:1984rw,
  AUTHOR = {Lions, P. L.},
  JOURNAL = {Ann Inst. H. Poincar\'e. Anal. Non Lin\'eaire},
  NUMBER = {2},
  PAGES = {109--145},
  PUBLISHER = {Gauthier-Villars},
  TITLE = {{The concentration-compactness principle in the calculus of variations. The locally compact case, part 1}},
  VOLUME = {1},
  YEAR = {1984},
}

@ARTICLE{Lions:1984dq,
  AUTHOR = {Lions, P. L.},
  JOURNAL = {Ann Inst. H. Poincar\'e. Anal. Non Lin\'eaire},
  NUMBER = {4},
  PAGES = {223--283},
  PUBLISHER = {Gauthier-Villars},
  TITLE = {{The concentration-compactness principle in the calculus of variations. The locally compact case, part 2}},
  VOLUME = {1},
  YEAR = {1984},
}

@ARTICLE{Pego:1985fk,
  AUTHOR = {Pego, Robert L.},
  JOURNAL = {Proc. Amer. Math. Soc.},
  MONTH = {October},
  NUMBER = {2},
  PAGES = {252--254},
  TITLE = {{Compactness in {\(L^2\)} and the Fourier transform}},
  VOLUME = {95},
  YEAR = {1985},
}

@ARTICLE{HancheOlsen2010385,
  AUTHOR = {Hanche-Olsen, Harald and Holden, Helge},
  JOURNAL = {Expo. Math.},
  NUMBER = {4},
  PAGES = {385--394},
  TITLE = {{The Kolmogorov--Riesz compactness theorem}},
  VOLUME = {28},
  YEAR = {2010},
}

@BOOK{HLP:1952,
  AUTHOR = {Hardy, G. H. and Littlewood, J. E. and P\'olya, G.},
  EDITION = {2nd},
  PUBLISHER = {Cambridge University Press},
  TITLE = {Inequalities},
  YEAR = {1952},
}

@ARTICLE{10.2307/2371912,
  AUTHOR = {G. P\'olya and G. Szeg\H{o}},
  JOURNAL = {Amer. J. of Math.},
  NUMBER = {1},
  PAGES = {1--32},
  TITLE = {Inequalities for the Capacity of a Condenser},
  VOLUME = {67},
  YEAR = {1945},
}

@ARTICLE{Choquard:2008oq,
  AUTHOR = {Choquard, Philippe and Stubbe, Joachim and Vuffray, Marc},
  JOURNAL = {Differential Integral Equations},
  NUMBER = {7-8},
  PAGES = {665--679},
  TITLE = {{Stationary solutions of the Schr\"odinger-Newton model---an ODE approach}},
  VOLUME = {21},
  YEAR = {2008},
}

@BOOK{Lieb:2010nx,
  AUTHOR = {Lieb, E. H. and Seiringer, R.},
  PUBLISHER = {Cambridge University Press},
  TITLE = {The Stability of Matter in Quantum Mechanics},
  YEAR = {2010},
}

\end{filecontents*}


\usepackage{hyperref}

\renewcommand{\leq}{\leqslant}
\renewcommand{\geq}{\geqslant}

\title{Bound States of the One-Dimensional Maxwell--Schr\"odinger Equations}
\author{Richard Chapling}

\affil{Department of Applied Mathematics and Theoretical Physics, \\ University of Cambridge, Cambridge, England}

\begin{document}

\maketitle

\begin{abstract}
	We prove the existence of a ground state of the Maxwell--Schr\"odinger equations in one spatial dimension, describing a specified amount of free charge under the influence of a fixed charge. For one case (equal free and fixed charge, i.e., a neutral atom), we introduce a new type of quartic Banach space, in which the Hamiltonian is naturally coercive. We also show that for a point charge, for any ratio of charge such that the ground state exists, it is symmetric and decreasing.
\end{abstract}



\section{Introduction}

The Maxwell--Schr\"odinger equations in one dimension, in the presence of a fixed background charge \(\rho\), can be shown to reduce to
\begin{equation}
\label{eq:s-c}
\begin{aligned}
	-u''+Vu &=\epsilon u, \\
	-V'' &=u^{2}+\rho
\end{aligned}
\end{equation}
(we shall demonstrate this in \S~\ref{sec:probcons}). We shall assume that \(u\) describes a fixed amount of charge,
\begin{equation}
	\int_{\reals} u^{2} = 1.
\end{equation}
Our analysis mainly focuses on a variant of the functional which gives \eqref{eq:s-c} as its Euler--Lagrange equations, namely
\begin{equation}
	E[u] := \int_{\reals} \abs{u'(x)}^{2} \, dx + \frac{1}{2} \int_{\reals} \int_{\reals} -\abs{x-y} [(u(x))^{2}+\rho(x)] [(u(y))^{2}+\rho(y)] \, dx \, dy.
\end{equation}

Our initial discussion revolves around the particular \( \rho \) corresponding to a point charge at the origin, i.e. \( \rho = -z\delta_{0} \), so a solution to the equations \eqref{eq:s-c} will describe the probability density of a bound state of electrons in a one-dimensional atom. This shall then be used to extend the results to any negative potential satisfying \( \int_{\reals} \abs{x} \abs{\rho(x)} \, dx < \infty \).

It is well-known that describing the electron field of an atom using the Schr\"odinger equation is inadequate even at low energy, since the self-interaction of the field is not present; this has significant implications in areas of quantum chemistry and semiconductor physics, for example. An initial way of resolving this theoretically is to write down the full Lagrangian of the Schr\"odinger field describing the electrons interacting with a Maxwellian electromagnetic field; the appropriate Euler--Lagrange equations then describe the  system in a consistent way.\footnote{For this system's application in semiconductor physics, see e.g. \cite{rossi2011theory}, and in quantum plasmai, \cite{Manfredi:1994kx}.}\textsuperscript{,}\footnote{c.f. the approach of the Hartree equation \cite{Hartree:1928uq,Lions19811245}, which has similar, but less general derivation.} The counterbalance of this benefit is the loss of simple analytic solutions, and so it is necessary to investigate the analytic properties of this system to discover if it is a sensible one to use as a model.

Unlike the Schr\"odinger--Newton equations, there does not exist a bound state when the charged field has no binding background opposite charge: this agrees with our natural intuition that as like charges repel, the charge distribution will naturally disperse to infinity. Hence it is natural to couple the fields to a fixed background charge distribution; in the example given above this corresponds to the nucleus of the atom, normally modelled as a point charge.

In this paper we shall discuss the one-dimensional analogue of the familiar three-dimensional version of this problem: this has advantage of simplicity, but requires quite different analytic equipment from higher-dimensional cases.

\subsection{Prior work}

The three-dimensional case was discussed by Coclite and Georgiev \cite{Coclite03solitarywaves}, who proved the existence of bound states of fixed norm and some of their properties, as well as a nonexistence result for the ``negative ion'' case of more moving charge than fixed. Existence of a bound state of the equations on a compact subset of \( \reals^{3} \) with specified electrical potential on the boundary was proven by Benci and Fortunato \cite{fortunato1998eigenvalue}.

The corresponding examples for the Schr\"odinger--Newton equations are well-understood: Lieb \cite{Lieb:1977uq} proved existence and uniqueness of the minimising solution in three spatial dimensions, and Choquard and Stubbe \cite{Choquard:2007fk} proved existence and uniqueness of the ground state in one spatial dimension. We adapt some of their methods, but the ``neutral atom'' case of equal fixed charge and moveable charge requires a more subtle argument, for which we introduce a new type of Banach space, which extends the idea of a weighted Sobolev space and the Rellich criterion.

\subsection{Results}
\newcommand{\cX}{\mathcal{X}}
\newcommand{\cB}{\mathcal{B}}
\newcommand{\fB}{\mathfrak{B}}

We prove (\autoref{thm:z>1exists}) that the functional \(E\) has a minimiser if \(z > 1\) (a ``positive ion''): for \( z>1 \), it acts in the far field, as one might expect, like a particle in the well of a point particle of charge \( z-1 \); this causes it to have sufficiently rapid decay that the first absolute moment of charge about the origin is finite, \textit{viz.} \( \int \abs{x} u(x)^{2} \, dx < \infty \).

For \( z=1 \) (a ``neutral atom''), the situation is more complicated, as is reflected in the greater complexity of the space we work in: the obvious Hilbert space \( \cX \) is replaced by the quartic Banach space \( \cB \).\footnote{See \S~2 for the definitions of these spaces.} However, this new space does provide us with coercivity, allowing us to demonstrate (\autoref{thm:z=1exists}) that the energy still achieves a minimum.

For \( z<1 \), we demonstrate that in this system, negative ions cannot exist: in \S~\ref{sec:z<1counterexample} we provide an example that shows that the energy is unbounded below, so it has no minimum.

\subsection{Derivation of the Equations Considered}
\label{sec:probcons}
Consider the Maxwell--Schr\"odinger Lagrangian density in $(1+1)$ dimensions:
\begin{align}
\label{eq:m--s}
	\mathcal{L} &= -\frac{1}{4} F_{\mu\nu} F^{\mu\nu} - A_{\mu} J^{\mu} + i \overline{\psi} \partial_{0} \psi - \frac{1}{2m} \nabla \overline{\psi} \cdot \nabla \psi \\
	&= \frac{1}{2} (\partial_{0}A_{1}-\partial_{1} A_{0})^{2} - A_{0}J^{0} + A_{1}J^{1} + i \overline{\psi} \partial_{0} \psi - \frac{1}{2m} \partial_{1} \overline{\psi} \partial_{1} \psi,
\end{align}
where \(J^{0} = q(j^{0}+\abs{\psi}^{2})\) and \(J^{1}=q\Im(\overline{\psi} \partial_{1} \psi )\), so we have a background charge and no background current. The Euler--Lagrange equations for \( A \) are
\begin{align*}
	\partial_{0} (\partial_{0} A_{1} - \partial_{1} A_{0}) &= J^{1}, \\
	-\partial_{1} (\partial_{1} A_{0} - \partial_{0} A_{1}) &= J^{0}
\end{align*}
In one dimension, we can force \(A_{1}=0\) using a gauge transformation, which reduces the Lagrangian to
\begin{equation*}
	\mathcal{L} = \frac{1}{2} (\partial_{1} A_{0})^{2} - A_{0}J^{0} + i \overline{\psi} \partial_{0} \psi - \frac{1}{2m} \partial_{1} \overline{\psi} \partial_{1} \psi.
\end{equation*}
We shall look for stationary states, \(\psi(t,x) = e^{-i\lambda t} \psi(x)\), \(\psi(x)\) real-valued, \(\partial_{0}A_{0} = 0\), leaving us with a final Lagrangian
\begin{equation*}
	\mathcal{L} = - \frac{1}{2m} \abs{\psi'}^{2} + \frac{1}{2}\Phi'^{2} -q\Phi(\varrho+\abs{\psi}^{2}) +\lambda \abs{\psi}^{2},
\end{equation*}
where we have written $\Phi=A_{0}$ and $\varrho=j^{0}$ for brevity.

Thus in this case the equations reduce to the Schr\"odinger--Coulomb equations,
\begin{align}
	-\psi'' + q \Phi \psi &= \lambda \psi \\
	-\Phi'' &= q(\psi^{2}+\varrho),
\end{align}
where \(\int \varrho = -Z \), and subject to \( \int \abs{\psi}^{2} = N \). Rescaling with \( u = N^{-1/2}\psi \), \( \rho = N^{-1} \varrho \) and \( V = N^{-1/2} \Phi \), the equations become
\begin{align}
	-u'' + N^{1/2}qVu &= \lambda u \\
	-V'' &= N^{1/2} q(u^{2}+\rho),
\end{align}
and then if we rescale the variable to \( y = N^{1/4}q^{1/2}x \), we finally attain the equations promised above,
\begin{align}
	-u'' + Vu &= \epsilon u \\
	-V'' &= u^{2}+\rho,
\end{align}
where \( N^{1/2} q \epsilon = \lambda \), and \( \int \rho = -Z/N =: -z \), subject to \( \int u^{2} = 1 \). This has non-dimensionalised the equations and tells us that the only significant number for examining the system is the charge ratio \(z\).

The obvious functional that gives these equations is
\begin{equation}
\label{eq:badfunctional}
	\int_{\reals} \left( u'^{2} + V(u^{2} + \rho) - \frac{1}{2} V'^{2} \right);
\end{equation}
however, this is obviously not \emph{a priori} bounded below; since \(V\) is the problem, it is better to solve the Coulomb equation and produce a different functional, which in turn produces one non-local interaction equation. It is easy to see that
\begin{equation}
	V(x) = -\frac{1}{2} \int_{\reals} \abs{x-y} (u(y)^{2}+\rho(y)) \, dy
\end{equation}
solves the Coulomb equation, so we may take the functional instead to be
\begin{equation}
\label{eq:energy}
	E[u] := \int_{\reals} \abs{u'(x)}^{2} \, dx + \frac{1}{2} \int_{\reals} \int_{\reals} -\abs{x-y} [(u(x))^{2}+\rho(x)] [(u(y))^{2}+\rho(y)] \, dx \, dy.
\end{equation}

\begin{remark}
	This functional is exactly equivalent to that given in \eqref{eq:badfunctional} if \( [VV']_{-\infty}^{\infty} =0 \), which we can see by integrating by parts; we find by consideration of the explicit forms of \(V\) and \(V'\) that sufficient conditions for this are \( z=1 \) and \( \int_{\reals} \abs{y}(u(y)^{2}+\rho(y)) \, dy  \) finite. Hereafter we consider \( E[u] \) the essential quantity, and investigate it under more general conditions.
\end{remark}

We shall initially take the singular background 
\begin{equation}
\label{eq:rhodef}
	\rho(x)=-z\delta(x),
\end{equation}
which may be thought of as a point particle with charge \(z\) at the origin.\footnote{And hence it is sensible to refer to this as a ``one-dimensional hydrogen-like atom''; it will become apparent that the one-dimensional case is sufficiently regular to allow us to work directly with this background distribution.} It generates the potential
\begin{equation*}
	V_{\rho}(x) = \frac{z}{2}\abs{x};
\end{equation*}
it is easy to check that this satisfies \( -V_{\rho}'' = \rho \) in the sense of distributions.

\section{Definitions and theory}

\subsection{Definitions}

We record here for definiteness the spaces considered in this paper.

We define first the functionals
\begin{align}
	b[f,g] &= \iint_{X \times X} f(x) g(y) d\mu(x,y) \\
	B[f] &= b[f,f],
\end{align}
where \( d\mu(x,y) = G(x,y) \, dx \times dy \), where \( G(x,y) = G(y,x) \) is symmetric, and positive-semidefinite in the sense that
\begin{equation*}
	b[f,f] \geq 0
\end{equation*}
for all \( f \in L^{1}(X) \). (In this paper, the measure space \(X\) is simply \(\reals\), but, as shall be noted below, some of the results are more generally true.)

Define the following spaces:
\begin{align}
\label{eq:spacedefns}
	H^{1} &= \left\{ (u: \reals \to \reals) : (\norm{u}_{H}^{2} :=) \norm{u'}_{2}^{2} + \norm{u}_{2}^{2} < \infty \right\}, \\
	\cX &= \left\{ u \in H^{1}: \int_{\reals} \abs{x} u(x)^{2} \, dx < \infty \right\} \\
	\cB &= \left\{ u \in H^{1}: B[u^{2}] < \infty \right\} \\
	\fB &= \left\{ f \in L^{1}(\reals): B[\abs{f}] < \infty \right\}, \\
	\intertext{and finally, if \(A\) is one of the above spaces, we write}
	A_{1} &= \{ u \in A : \norm{u}_{2} = 1 \}.
\end{align}

\subsection{The space \texorpdfstring{\( \cX \)}{cX}}
We have the following theorem:\footnote{Reed and Simon, \cite{reed1978analysis} XIII.65, also known as the ``Rellich Criterion''}
\begin{theorem}
\label{thm:XHilbert}
	\( \cX \) is a Hilbert space with inner product
	\begin{equation}
		\langle u, v \rangle_{\cX} = \int_{\reals} \left( u' v' + (1+\abs{x})uv \right) \, dx,
	\end{equation}
	and \( \cX \subset L^{2} \), and any weakly convergent sequence in \(\cX\) has a subsequence that is strongly convergent in the \( L^{2} \)-norm (i.e. \( \cX \) is compactly contained in \( L^{2} \), written \( \cX \Subset L^{2}(\reals)\)).
\end{theorem}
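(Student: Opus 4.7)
The plan is to check the three assertions in the order they are stated: that the given form is a Hilbert inner product (including completeness), that the inclusion into $L^2$ is continuous, and that it is in fact compact.

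\textbf{Hilbert space structure.} Bilinearity and symmetry of $\langle \cdot, \cdot \rangle_{\cX}$ are immediate. Positive-definiteness is clear because the weight $1+|x|$ is bounded below by $1$, which also gives the continuous inclusion $\cX \hookrightarrow H^1 \hookrightarrow L^2$ with $\|u\|_2 \leq \|u\|_{H^1} \leq \|u\|_{\cX}$. For completeness, I would take a Cauchy sequence $\{u_n\}$ in $\cX$; its boundedness in $H^1$ gives an $H^1$-limit $u$, and $\{(1+|x|)^{1/2} u_n\}$ being Cauchy in $L^2$ gives an $L^2$-limit $w$. Extracting subsequences convergent pointwise a.e.\ identifies $w = (1+|x|)^{1/2} u$ a.e., so $u \in \cX$ and $u_n \to u$ in $\cX$.

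\textbf{Compact embedding.} The main step is the compactness assertion, which is the substance of the Rellich criterion. Given a weakly convergent sequence $u_n \tow u$ in $\cX$, the uniform boundedness principle gives $\sup_n \|u_n\|_{\cX} \leq M$. The inclusion $\cX \hookrightarrow H^1$ being continuous, $u_n \tow u$ also weakly in $H^1$, and the standard one-dimensional Rellich--Kondrachov theorem yields, for any $R > 0$, a subsequence with $u_n \to u$ strongly in $L^2([-R,R])$. The weight then controls the tail uniformly: for $|x| > R$,
\begin{equation*}
    \int_{|x| > R} u_n(x)^2 \, dx \leq \frac{1}{R} \int_{|x| > R} |x|\, u_n(x)^2 \, dx \leq \frac{M^2}{R},
\end{equation*}
and the same bound holds for $u$ by weak lower semicontinuity of the $\cX$-norm. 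Splitting
\begin{equation*}
    \|u_n - u\|_2^2 = \int_{-R}^{R} |u_n-u|^2 \, dx + \int_{|x|>R} |u_n-u|^2 \, dx
\end{equation*}
and letting first $n \to \infty$ and then $R \to \infty$ gives strong $L^2$-convergence along this subsequence, a standard diagonal argument upgrading it if need be.

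The only mildly delicate step is the tail estimate, but the pointwise bound $1 \leq |x|/R$ on $\{|x| > R\}$ makes it essentially automatic; the rest is the familiar pattern of combining local Rellich compactness with uniform control at infinity, so I do not anticipate any serious obstacle. The argument is robust enough to go through for any weight that is bounded below by a positive constant and tends to infinity at $\pm\infty$, which matches the generality of the cited Reed--Simon statement.
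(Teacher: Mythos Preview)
Your argument is correct and follows the standard route for this kind of weighted Sobolev compactness result: verify the inner product axioms, use the weight to control tails uniformly, and combine with local Rellich--Kondrachov to get global strong $L^{2}$ convergence. The only minor remark is that the diagonal extraction is not actually needed: once the weak limit $u$ is fixed, every subsequence of $(u_{n})$ has a further subsequence converging to $u$ strongly in $L^{2}([-R,R])$, so the full sequence converges there, and you can take $R\to\infty$ directly.

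As for comparison with the paper: the paper does not supply its own proof of this theorem. It simply cites Reed and Simon, \emph{Methods of Modern Mathematical Physics}~IV, Theorem~XIII.65 (the Rellich criterion), and moves on. Your proposal therefore fills in exactly the argument the paper elects to quote, and does so correctly.
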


\subsection{The \texorpdfstring{\( B \)}{B} spaces}
\label{sec:Bspaces}

Let \( \norm{u}_{B} = B[u^{2}]^{1/4} \). This section is devoted to proving
\begin{theorem}
\label{thm:BBanach}
	If \( B \) is positive-semidefinite, \( \cB \) with the norm
	\begin{equation}
		\norm{u}_{H^{1}} + \norm{u}_{B}
	\end{equation}
	is a Banach space.
\end{theorem}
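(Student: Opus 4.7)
The plan is to verify the three norm axioms for $\norm{u}_{H^{1}} + \norm{u}_B$ and then to prove completeness using a Fatou-type argument. Definiteness is inherited directly from the $H^{1}$ component, and homogeneity is immediate from the quartic scaling $\norm{\alpha u}_{B}^{4} = B[\alpha^{2} u^{2}] = \alpha^{4} B[u^{2}]$; the content of the theorem therefore lies entirely in the triangle inequality for $\norm{\cdot}_{B}$ and in the completeness step.

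For the triangle inequality, the starting observation is that positive-semidefiniteness of the bilinear form $b$ on $L^{1}$ yields Cauchy--Schwarz $\abs{b[f,g]} \leq B[f]^{1/2} B[g]^{1/2}$ in the standard Hilbert-space manner, and consequently that $B^{1/2}$ defines a seminorm on the subspace $\fB$. Expanding $(u+v)^{2} = u^{2} + 2uv + v^{2}$ and applying this seminorm triangle inequality,
\[
B\bigl[(u+v)^{2}\bigr]^{1/2} \leq B[u^{2}]^{1/2} + 2 B[uv]^{1/2} + B[v^{2}]^{1/2}.
\]
To close the estimate we need $B[uv] \leq B[u^{2}]^{1/2} B[v^{2}]^{1/2}$: this is a Cauchy--Schwarz inequality on the double integral
\[
b[uv,uv] = \iint G(x,y)\,[u(x)u(y)]\,[v(x)v(y)]\,dx\,dy,
\]
viewing $G\,dx\,dy$ as a positive measure and separating the factors depending on $u$ from those depending on $v$. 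Substituting this bound back gives $B[(u+v)^{2}]^{1/2} \leq (B[u^{2}]^{1/4} + B[v^{2}]^{1/4})^{2}$, which is the desired quartic triangle inequality.

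Completeness proceeds by the usual construction. Given a Cauchy sequence $(u_{n}) \subset \cB$, its components form a Cauchy sequence in the complete space $H^{1}$, hence $u_{n} \to u$ in $H^{1}$; extracting a subsequence, we may assume pointwise a.e.\ convergence of $u_{n}$ and $u_{n}'$. For each fixed $m$, Fatou's lemma applied to the non-negative integrand $G(x,y)\,(u_{n}-u_{m})^{2}(x)\,(u_{n}-u_{m})^{2}(y)$ gives
\[
B\bigl[(u-u_{m})^{2}\bigr] \leq \liminf_{n \to \infty} B\bigl[(u_{n}-u_{m})^{2}\bigr],
\]
and the right-hand side is arbitrarily small for large $m$ by the Cauchy condition; hence $\norm{u - u_{m}}_{B} \to 0$, and the triangle inequality already established shows $u \in \cB$ with convergence $u_{n} \to u$ in $\cB$.

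The main obstacle is the intermediate estimate $B[uv] \leq B[u^{2}]^{1/2} B[v^{2}]^{1/2}$, which requires extracting enough positivity from the kernel $G$ to apply Cauchy--Schwarz on the double integral; in the case of pointwise non-negative $G$ this is direct, and the remaining axioms plus the Fatou step are then essentially routine.
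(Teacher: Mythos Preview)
Your proof is correct and takes a route that differs from the paper's in two places worth noting.

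For the triangle inequality, the paper does not decompose $(u+v)^2$ algebraically; instead it iterates Minkowski's inequality in $L^2$, first against the measure $d\nu(x)=\int (u+v)^2(y)\,d\mu(x,y)$ and then against $d\nu_u(y)=\int u^2(x)\,d\mu(x,y)$, arriving at the cross term $b[u^2,v^2]^{1/2}$ rather than your $B[uv]^{1/2}$. Both cross terms are then dispatched by a Cauchy--Schwarz step: the paper uses the inner-product Cauchy--Schwarz for $b$, while you use the $L^2(G\,dx\,dy)$ Cauchy--Schwarz on the factorisation $u(x)v(x)u(y)v(y)=[u(x)u(y)][v(x)v(y)]$. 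Your route is slightly more direct but depends more visibly on pointwise non-negativity of $G$ (to make $G\,dx\,dy$ a measure and to ensure $uv\in\fB$ via $|uv|\leq\tfrac12(u^2+v^2)$); the paper's iterated-Minkowski argument also tacitly needs $G\geq 0$ to form $d\nu$, so in practice the hypotheses coincide.

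For completeness, the paper is brief: it observes that $(\norm{u_m}_B)$ is a real Cauchy sequence and asserts ``by strong convergence'' that its limit is $\norm{u}_B$. Your Fatou argument on $G(x,y)(u_n-u_m)^2(x)(u_n-u_m)^2(y)$ is more explicit and in fact supplies exactly the lower-semicontinuity step the paper leaves implicit; it also gives $\norm{u-u_m}_B\to 0$ directly, not merely $u\in\cB$.
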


\begin{lemma}
	If \( B \) is positive-(semi)definite, \( b[f,g] \) is a (degenerate) inner product on \( \fB \). If \( B \) is nondegenerate, it turns \( \fB \) into a Hilbert space.
\end{lemma}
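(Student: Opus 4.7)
The plan is as follows. Bilinearity of $b$ is immediate from linearity of the integral; symmetry $b[f,g]=b[g,f]$ follows from $G(x,y)=G(y,x)$ by Fubini; non-negativity $b[f,f]\geq 0$ is precisely the positive-semidefiniteness hypothesis; nondegeneracy, when assumed, completes the inner-product axioms. The only substantive step is to verify that $b[f,g]$ is finite for $f,g\in\mathfrak{B}$. For this I will appeal to the Cauchy--Schwarz inequality for positive semidefinite forms: since $b[f+tg,f+tg]=b[f,f]+2t\,b[f,g]+t^{2}\,b[g,g]\geq 0$ for all $t\in\reals$, its discriminant is nonpositive, giving $b[f,g]^{2}\leq b[f,f]\,b[g,g]$. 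Applied to $|f|,|g|\in L^{1}$ this yields $b[|f|,|g|]\leq\sqrt{B[|f|]\,B[|g|]}<\infty$; splitting $f=f^{+}-f^{-}$ and likewise for $g$, the quantity $|b[f,g]|$ is then bounded by a finite sum of such terms.

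For the Hilbert-space claim one must prove completeness of $\mathfrak{B}$ under the norm $\|f\|_{\mathfrak{B}}:=b[f,f]^{1/2}$. My strategy is to realise $b$ as an $L^{2}$ inner product via a square-root decomposition of the kernel: positive-semidefiniteness and symmetry should yield, by Bochner/Naimark-type arguments, a factorisation $G(x,y)=\int K(x,\lambda)\,K(y,\lambda)\,d\nu(\lambda)$, so that $b[f,g]=\langle\tilde f,\tilde g\rangle_{L^{2}(d\nu)}$ with $\tilde f(\lambda):=\int K(x,\lambda)f(x)\,dx$. A $b$-Cauchy sequence $(f_{n})\subset\mathfrak{B}$ then becomes an $L^{2}(d\nu)$-Cauchy sequence $(\tilde f_{n})$, which converges by completeness of $L^{2}$.

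The main obstacle is to show the abstract limit corresponds to an honest element $f_{\infty}\in\mathfrak{B}$, equivalently that the image of $\mathfrak{B}$ in $L^{2}(d\nu)$ is closed. For the concrete Coulomb-type kernel $G(x,y)\propto-|x-y|$ relevant later in the paper this is transparent: an integration by parts gives $b[f,g]=2\int V_{f}'\,V_{g}'$ with $V_{f}:=-\tfrac{1}{2}\int|x-y|f(y)\,dy$ solving $-V_{f}''=f$, identifying $\mathfrak{B}$ isometrically with (essentially) the space of $L^{1}$ densities whose potentials have $L^{2}$-integrable derivative; completeness then descends from that of $L^{2}$, with $f_{\infty}$ recovered from $-V_{\infty}''=f_{\infty}$ in the sense of distributions. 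In the general statement I would argue analogously, exhibiting an inverse transform mapping $L^{2}(d\nu)$-limits back into $\mathfrak{B}$.
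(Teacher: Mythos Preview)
Your treatment of the inner-product axioms is correct and more careful than the paper's (which simply declares them ``obvious''); the Cauchy--Schwarz argument for finiteness of $b[f,g]$ is a worthwhile addition that the paper omits.

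For completeness you take a genuinely different route from the paper, and it has gaps. The paper's argument is brief and structural: since $\fB\subset L^{1}$ and $L^{1}$ is complete, one need only check that the condition $B[|f|]<\infty$ is closed under the $B[\cdot]^{1/2}$-norm, in exact parallel to the subsequent proof that $\cB$ is Banach (with $L^{1}$ playing the role of $H^{1}$). No kernel factorisation is invoked. Your factorisation $G(x,y)=\int K(x,\lambda)K(y,\lambda)\,d\nu(\lambda)$, by contrast, is not justified for a general symmetric PSD kernel: Bochner's theorem applies only to translation-invariant kernels, and Mercer-type results require compactness or continuity hypotheses not assumed here. Your concrete example is also misdirected: the kernel $-|x-y|$ is \emph{not} positive semidefinite on $L^{1}$ (the paper later establishes positivity only on the mean-zero subspace $K$), so it does not satisfy the lemma's hypotheses. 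The PSD kernel actually in play is the one in $C$, namely $\min\{|x|,|y|\}$ on $\{xy>0\}$, which \emph{does} admit the explicit factorisation $C_{+}[f]=\int_{0}^{\infty}\bigl(\int_{z}^{\infty}f\bigr)^{2}dz$; but even there, $L^{2}$-convergence of $z\mapsto\int_{z}^{\infty}f_{n}$ does not by itself yield an $L^{1}$ limit $f_{\infty}$---which is precisely what the paper's appeal to $L^{1}$-completeness supplies.
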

\begin{proof}
	Bilinearity is obvious, as is symmetry. Positive-definiteness (or non-negative-definiteness) follows from the property of \(B\) assumed.
	
	In the non-degenerate case, we obtain the Hilbert space structure as follows: since \( L^{1} \) is complete, it suffices to check that \( B[\abs{\cdot}] \) being finite is a closed condition with respect to the norm \( B[\cdot]^{1/2} \). This is straightforward, and essentially works in the same way as the proof of \autoref{thm:BBanach} below. 
\end{proof}

\begin{lemma}
	If \( B \) is positive-(semi)definite, \( \norm{\cdot}_{B}\) is a (semi)norm on \( \cB \).
\end{lemma}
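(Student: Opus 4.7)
Three of the four (semi)norm axioms are essentially immediate. Nonnegativity is positive-semidefiniteness of \( B \): \( \norm{u}_B = B[u^2]^{1/4} \geq 0 \). Absolute homogeneity is a quartic scaling: \( (\alpha u)^2 = \alpha^2 u^2 \) gives \( B[(\alpha u)^2] = \alpha^4 B[u^2] \), and the fourth root yields \( \norm{\alpha u}_B = \abs{\alpha} \norm{u}_B \). For the norm case, nondegeneracy of \( B \) gives \( \norm{u}_B = 0 \Rightarrow B[u^2]=0 \Rightarrow u^2 = 0 \Rightarrow u = 0 \) almost everywhere.

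The content lies in the triangle inequality. My plan is to reinterpret \( B[u^2] = \iint u(x)^2 u(y)^2 \, d\mu(x,y) \) as the squared \( L^2(\mu) \)-norm of the tensor \( (u \otimes u)(x,y) := u(x) u(y) \), so that \( \norm{u}_B^2 = \norm{u \otimes u}_{L^2(\mu)} \). Bilinearity of \( \otimes \) then gives \( (u+v) \otimes (u+v) = u \otimes u + u \otimes v + v \otimes u + v \otimes v \), and the Hilbert-space triangle inequality in \( L^2(\mu) \), combined with the symmetry \( G(x,y) = G(y,x) \) which identifies the two cross terms, yields
\[ \norm{(u+v) \otimes (u+v)}_{L^2(\mu)} \leq \norm{u \otimes u}_{L^2(\mu)} + 2\norm{u \otimes v}_{L^2(\mu)} + \norm{v \otimes v}_{L^2(\mu)}. \]
The mixed term satisfies \( \norm{u \otimes v}_{L^2(\mu)}^2 = b[u^2, v^2] \), which by Cauchy--Schwarz for the inner product \( b \) (the preceding lemma) is bounded by \( B[u^2]^{1/2} B[v^2]^{1/2} = \norm{u}_B^2 \norm{v}_B^2 \). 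Substituting gives \( \norm{u+v}_B^2 \leq (\norm{u}_B + \norm{v}_B)^2 \), and the triangle inequality follows on taking square roots.

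The bookkeeping needed is that \( uv \in \mathfrak{B} \) for \( u, v \in \mathcal{B} \), so that the above quantities are finite; this follows from \( \abs{uv} \leq \tfrac{1}{2}(u^2 + v^2) \) together with the Cauchy--Schwarz finiteness of \( b[u^2, v^2] \). The step most deserving of care is the Hilbert-space \( L^2(\mu) \) interpretation itself, which implicitly treats \( d\mu = G\,dx\,dy \) as a positive measure---the natural reading of the notation. Equivalently, one may avoid the tensor picture by expanding \( (u+v)^2 = u^2 + 2uv + v^2 \) inside \( \mathfrak{B} \), applying the \( \mathfrak{B} \)-seminorm triangle inequality from the preceding lemma, and reducing everything to the key bound \( B[uv] \leq B[u^2]^{1/2} B[v^2]^{1/2} \); that bound follows from the pointwise inequality \( (u(x)v(y) - u(y)v(x))^2 \geq 0 \) integrated against the nonnegative kernel \( G \), combined with Cauchy--Schwarz for \( b \).
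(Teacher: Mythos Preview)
Your proof is correct and follows essentially the same approach as the paper: both arrive at the key inequality $B[(u+v)^2]^{1/2} \leq B[u^2]^{1/2} + 2\,b[u^2,v^2]^{1/2} + B[v^2]^{1/2}$ and then finish with Cauchy--Schwarz for $b$. The only difference is packaging: you obtain this via a single Minkowski step in $L^2(X\times X,\,d\mu)$ applied to the tensor expansion $(u+v)\otimes(u+v)=u\otimes u+u\otimes v+v\otimes u+v\otimes v$, whereas the paper iterates Minkowski one variable at a time using the slice measures $d\nu(x)=\int_y (u+v)^2(y)\,d\mu(x,y)$; both routes implicitly use that $G\geq 0$ so that these are genuine measures, a point you correctly flag.
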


\begin{proof}
	Absolute homogeneity,
	\begin{equation*}
	\norm{\lambda u}_{B} = \abs{\lambda} \norm{u}_{B} \qquad \forall \lambda \in \reals, u \in \cB,
	\end{equation*}
	is obvious. For the triangle inequality, notice that
	\begin{equation*}
		d\nu(x) := \int_{y} (u+v)^{2}(y) d\mu(x,y)
	\end{equation*}
	is a measure on \( \cB \), so we have a triangle inequality
	\begin{equation*}
		B[(u+v)^{2}]^{1/2}=\left(\int (u+v)^{2}(x) d\nu(x)\right)^{1/2} \leq \left(\int u^{2}(x) d\nu(x)\right)^{1/2} + \left(\int v^{2}(x) d\nu(x)\right)^{1/2}
	\end{equation*}
	If we now interchange the order of integration, we can carry out the same procedure again, using \( d\nu_{u}(y) = \int_{x} u^{2}(x) d\mu(x,y) \):
	\begin{align*}
		\left(\int u^{2}(x) d\nu(x)\right)^{1/2} &= \left(\int_{y} (u+v)^{2}(y) \int_{x} u^{2}(x) d\mu(x,y)\right)^{1/2} \\
		&= \left( \int_{y} (u+v)^{2}(y) d\nu_{u}(y) \right)^{1/2} \\
		&\leq \left( \int_{y} u^{2}(y) d\nu_{u}(y) \right)^{1/2} + \left( \int_{y} v^{2}(y) d\nu_{u}(y) \right)^{1/2} \\
		&= B[u^{2}]^{1/2}+b[u^{2},v^{2}]^{1/2},
	\end{align*}
	and similarly with \(v\), to obtain
	\begin{equation*}
		B[(u+v)^{2}]^{1/2} \leq B[u^{2}]^{1/2}+2b[u^{2},v^{2}]^{1/2}+B[v^{2}]^{1/2},
	\end{equation*}
	using the symmetry of \(d\mu\) on the cross terms. To eliminate this cross term (which is obviously positive), we can use the Cauchy--Schwarz inequality:
	\begin{equation*}
		b[u^{2},v^{2}] \leq B[u^{2}]^{1/2}B[v^{2}]^{1/2},
	\end{equation*}
	which leads us to
	\begin{align*}
		\norm{u+v}_{B} = (B[(u+v)^{2}]^{1/2})^{1/2} &\leq \left( B[u^{2}]^{1/2} + 2 b[u^{2},v^{2}] + B[v^{2}]^{1/2} \right)^{1/2} \\
		&\leq \left( B[u^{2}]^{1/2} + 2 B[u^{2}]^{1/4}B[v^{2}]^{1/4} + B[v^{2}]^{1/2} \right)^{1/2} \\
		&= \left( ( B[u^{2}]^{1/4} + B[v^{2}]^{1/4} )^{2} \right)^{1/2} \\
		&= B[u^{2}]^{1/4} + B[v^{2}]^{1/4} = \norm{u}_{B} + \norm{v}_{B},
	\end{align*}
	as required.
\end{proof}

\begin{proof}[Proof of Theorem]
	This is now simple: since both norms are norms on \( \cB \), so is their sum. Completeness is simple: \( H^{1} \) is complete, so it suffices to check that if \( u_{m} \to u \) is a Cauchy sequence for \( \norm{\cdot}_{H^{1}} + \norm{\cdot}_{B} \), then \( B[u^{2}] < \infty \). But this is clear: the numbers \( \norm{u_{m}}_{B} = B[u_{m}^{2}]^{1/4} \) form a real-valued Cauchy sequence, so they have a finite limit, which by strong convergence is \( B[u^{2}]^{1/4} \).
\end{proof}

We shall also need to produce weakly convergent subsequences in \(\cB\); the compactness that enables this is a consequence of
\begin{lemma}
\label{thm:Bunifconvex}
	\( \norm{\cdot}_{B} \) is uniformly convex on \(L^{2B}(X) := \{ u \in L^{2}(X): B[u^{2}]<\infty \}\).
\end{lemma}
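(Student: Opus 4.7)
The plan is to establish a Clarkson-style $p = 4$ inequality
\[
\norm{u+v}_B^4 + \norm{u-v}_B^4 \leq 8 \bigl( \norm{u}_B^4 + \norm{v}_B^4 \bigr),
\]
from which uniform convexity will follow in the usual way: specialising to $\norm{u}_B = \norm{v}_B = 1$ and $\norm{u-v}_B \geq \epsilon$ yields $\norm{(u+v)/2}_B^4 \leq 1 - \epsilon^4/16$, so the modulus of convexity can be taken to be $\delta(\epsilon) = 1 - (1 - \epsilon^4/16)^{1/4} > 0$.

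First, expanding $(u \pm v)^2 = u^2 \pm 2uv + v^2$ and using bilinearity and symmetry of $b$, the odd-power cross terms cancel upon summing, leaving
\[
B[(u+v)^2] + B[(u-v)^2] = 2 B[u^2] + 2 B[v^2] + 4\, b[u^2, v^2] + 8\, B[uv].
\]
The term $4\, b[u^2, v^2]$ is straightforward to handle: Cauchy--Schwarz in the Hilbert space $\fB$ gives $b[u^2, v^2] \leq \norm{u^2}_{\fB} \norm{v^2}_{\fB} = \norm{u}_B^2 \norm{v}_B^2$, and AM--GM then yields $4\, b[u^2, v^2] \leq 2 (\norm{u}_B^4 + \norm{v}_B^4)$.

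The only real obstacle I expect is controlling the genuinely quartic cross term $8\, B[uv]$, since this does not follow from the Hilbert-space structure of $\fB$ alone. I would exploit the same non-negativity of the measure $d\mu$ that underlies the triangle-inequality argument above. From the pointwise bound $2\abs{uv} \leq u^2 + v^2$ one has $\abs{uv}(x)\abs{uv}(y) \leq \tfrac{1}{4}(u^2+v^2)(x)(u^2+v^2)(y)$, and integrating against the non-negative measure $G\,dx\,dy$, together with $B[uv] \leq B[\abs{uv}]$ (valid for the same reason), gives $B[uv] \leq \tfrac{1}{4} B[u^2+v^2]$. The triangle inequality in $\fB$ produces $B[u^2+v^2]^{1/2} \leq \norm{u}_B^2 + \norm{v}_B^2$, and we arrive at $B[uv] \leq \tfrac{1}{4}(\norm{u}_B^2 + \norm{v}_B^2)^2 \leq \tfrac{1}{2}(\norm{u}_B^4 + \norm{v}_B^4)$. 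Substituting the two bounds into the expansion gives the Clarkson $p = 4$ inequality above, and hence uniform convexity.
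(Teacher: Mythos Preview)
Your proof is correct and follows essentially the same route as the paper: expand $B[(u+v)^2]+B[(u-v)^2]$, control the cross term $B[uv]$ via the pointwise bound $|2uv|\leq u^2+v^2$ together with positivity of $G$, and apply Cauchy--Schwarz for $b$ to reach a Clarkson-type inequality and hence uniform convexity. The only cosmetic difference is that the paper keeps the combination $B[u^2]+B[v^2]+2b[u^2,v^2]$ intact and bounds it once by $(\norm{u}_B^2+\norm{v}_B^2)^2$, obtaining the slightly sharper form $\norm{u+v}_B^4+\norm{u-v}_B^4\leq 4(\norm{u}_B^2+\norm{v}_B^2)^2$, whereas you split the terms and use AM--GM, landing at $8(\norm{u}_B^4+\norm{v}_B^4)$; both yield the same modulus $\delta(\varepsilon)=1-(1-\varepsilon^4/16)^{1/4}$.
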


\begin{proof}
	By expansion, we have
	\begin{equation*}
		B[(u+v)^{2}]+B[(u-v)^{2}] = 2(B[u^{2}]+B[v^{2}]+2b[u^{2},v^{2}])+2B[2uv],
	\end{equation*}
	and \( 2uv < u^{2}+v^{2} \), so
	\begin{equation*}
		B[2uv] \leq B[u^{2}+v^{2}] = B[u^{2}]+B[v^{2}]+2b[u^{2},v^{2}] \leq (B[u^{2}]^{1/2}+B[v^{2}]^{1/2})^{2},
	\end{equation*}
	by Cauchy--Schwarz or the triangle inequality, and we then have
	\begin{equation*}
		B[(u+v)^{2}]+B[(u-v)^{2}] \leq 4(B[u^{2}]+B[v^{2}]+2b[u^{2},v^{2}]) \leq 4(B[u^{2}]^{1/2}+B[v^{2}]^{1/2})^{2},
	\end{equation*}
	or
	\begin{equation*}
		\norm{u+v}_{B}^{4}+\norm{u-v}_{B}^{4} \leq 4(\norm{u}_{B}^{2}+\norm{v}_{B}^{2})^{2},
	\end{equation*}
	Now suppose that \( \norm{u}_{B}=\norm{v}_{B}=1 \), and \( \norm{u-v}_{B} \geq \varepsilon \). Then
	\begin{equation*}
		\norm{\tfrac{1}{2}(u+v)}_{B}^{4} \leq \frac{1}{4} (\norm{u}_{B}^{2}+\norm{v}_{B}^{2})^{2} - \frac{1}{16}\norm{u-v}_{B}^{4} \leq 1-\delta,
	\end{equation*}
	for some \( \delta=\delta(\varepsilon)>0 \), which is exactly uniform convexity.
\end{proof}

\begin{corollary}
\label{thm:LBreflex}
	\( L^{2B}(X) \) equipped with the \(B\)-norm is reflexive.
\end{corollary}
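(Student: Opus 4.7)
The plan is to invoke the Milman--Pettis theorem: every uniformly convex Banach space is reflexive. Lemma~\ref{thm:Bunifconvex} supplies the uniform convexity of \(\norm{\cdot}_B\) on \(L^{2B}(X)\), so the only piece that still needs to be verified is that \((L^{2B}(X), \norm{\cdot}_B)\) is in fact complete.

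For completeness I would imitate the argument at the end of the proof of Theorem~\ref{thm:BBanach}. Let \((u_n)\) be a \(\norm{\cdot}_B\)-Cauchy sequence. Since \(\norm{u_n - u_m}_B^{4} = B[(u_n - u_m)^2]\), the squared differences \((u_n - u_m)^2\) form a Cauchy sequence in the (semi-)Hilbert space \(\fB\) under the norm \(B[\cdot]^{1/2}\). Using the inclusion \(\fB \hookrightarrow L^1\) built into the definition, one extracts a subsequence converging almost everywhere and identifies a measurable candidate \(u\); a Fatou-type lower semi-continuity step for \(B[\cdot]\) under pointwise convergence, combined with the Cauchy control on \(\norm{u_n}_B\), then delivers both \(u \in L^{2B}(X)\) and \(\norm{u_n - u}_B \to 0\).

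With completeness in hand, Milman--Pettis gives the result immediately. I expect the main technical obstacle to lie in producing the almost-everywhere candidate limit \(u\): the map \(u \mapsto u^2\) is nonlinear and insensitive to sign, so the passage from \(\fB\)-Cauchyness of \((u_n - u_m)^2\) to a genuine pointwise limit of \(u_n\) must be done carefully, relying on the \(L^1\)-information carried by membership of \(\fB\). Once a.e. convergence is secured, the Fatou step is standard, and the rest is routine.
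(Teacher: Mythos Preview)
Your core approach---uniform convexity plus Milman--Pettis---is exactly what the paper intends: the corollary is stated without proof immediately after Lemma~\ref{thm:Bunifconvex}, so the implicit argument is simply ``uniformly convex Banach space $\Rightarrow$ reflexive''. The paper does not verify completeness either.

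Your completeness sketch, however, has a real gap. The inclusion $\fB \hookrightarrow L^{1}$ is only set-theoretic; nothing in the setup says the $B[\cdot]^{1/2}$-norm dominates the $L^{1}$-norm for a general kernel $G$, so you cannot extract an a.e.\ convergent subsequence from $B$-Cauchy data alone. (Also, ``the squared differences $(u_{n}-u_{m})^{2}$ form a Cauchy sequence'' is doubly indexed and not the statement you want; you would need $u_{n}^{2}$ to be Cauchy in $\fB$, which does not follow directly from $B[(u_{n}-u_{m})^{2}]\to 0$.) More seriously, $L^{2B}(X)=\{u\in L^{2}:B[u^{2}]<\infty\}$ carries an $L^{2}$ membership requirement that $\norm{\cdot}_{B}$ alone need not enforce, so a $\norm{\cdot}_{B}$-Cauchy sequence may have no candidate limit in $L^{2}$ at all. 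This lacuna is shared by the paper; in the only place the corollary is used (the subsequent lemma on reflexivity of $\cB$), the missing pointwise control is supplied by the $H^{1}$ part of the norm, which is what makes the argument go through there.
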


\begin{lemma}
	\(\cB\) with norm \( \norm{u}_{H}+\norm{u}_{B} \) is reflexive.
\end{lemma}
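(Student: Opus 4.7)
The plan is the classical strategy for an intersection of two reflexive Banach spaces: realise \( \cB \) isometrically as a closed subspace of the product \( Y := H^{1} \oplus L^{2B}(\reals) \) equipped with the norm \( \norm{(v, w)}_{Y} := \norm{v}_{H^{1}} + \norm{w}_{B} \). Since \( H^{1} \) is reflexive as a Hilbert space and \( L^{2B} \) is reflexive by \autoref{thm:LBreflex}, the direct sum \( Y \) is reflexive. The diagonal embedding \( T \colon \cB \to Y \), \( Tu := (u, u) \), is a linear isometry onto its image by the very definition of the \( \cB \)-norm, and a closed subspace of a reflexive Banach space is reflexive, so it suffices to verify that \( T(\cB) \) is closed in \( Y \).

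To that end, let \( (u_{n}, u_{n}) \to (f, g) \) in \( Y \), so \( \norm{u_{n} - f}_{H^{1}} \to 0 \) and \( \norm{u_{n} - g}_{B} \to 0 \). The continuous embedding \( H^{1} \hookrightarrow L^{2} \) gives \( u_{n} \to f \) in \( L^{2}(\reals) \); passing to a subsequence we may assume \( u_{n} \to f \) pointwise almost everywhere. Then Fatou's lemma applied to the (non-negative) integrand of \( B[(u_{n}-g)^{2}] \) — or, failing pointwise positivity of the kernel \( G \), the lower semi-continuity of \( \norm{\cdot}_{B} \) as a seminorm, which is immediate from the triangle inequality established in the previous lemma — yields
\[ B[(f - g)^{2}] \leq \liminf_{n \to \infty} B[(u_{n} - g)^{2}] = 0. \]
Invoking the triangle inequality for \( \norm{\cdot}_{B} \) once more gives \( \norm{f}_{B} \leq \norm{f-g}_{B} + \norm{g}_{B} = \norm{g}_{B} < \infty \), so \( f \in \cB \), and moreover \( (f,g) \) and \( Tf = (f, f) \) coincide as elements of \( Y \) (strictly speaking, after quotienting by the kernel of the \( B \)-seminorm, but the resulting element of the quotient is the same). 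Hence \( (f, g) \in T(\cB) \), so \( T(\cB) \) is closed.

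The main obstacle is reconciling two rather different modes of convergence in the target product: \( H^{1} \)-convergence pins down \( f \) via pointwise almost-everywhere representatives, whereas convergence in the \( B \)-norm only controls \( g \) up to the kernel of the \( B \)-seminorm. Fatou's lemma on the defining double integral is precisely the bridge that forces the two limits to coincide; the rest of the argument is structural.
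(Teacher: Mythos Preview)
Your diagonal-embedding argument is correct and is in fact one of the two proofs the paper offers: before giving its written-out argument, the paper simply cites Liu--van Rooij, Cor.~1.8, for the abstract fact that an intersection of reflexive spaces equipped with the sum (equivalently max) norm is reflexive --- precisely what your construction establishes from scratch. The paper's \emph{written} proof then takes a different route, via the Eberlein--\v{S}mulian characterisation: a bounded sequence in \(\cB\) is bounded separately in \(H^{1}\) and in \(L^{2B}\); one extracts a weakly convergent subsequence in \(H^{1}\), then from that a further subsequence weakly convergent in \(L^{2B}\), and the final subsequence converges weakly in the intersection \(\cB\). Your route trades this two-stage extraction for a closedness check in the product; both are standard, and yours has the mild advantage of making explicit the step where the two limits are identified (a point the paper's direct proof leaves implicit).

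One wrinkle: your parenthetical fallback, ``the lower semi-continuity of \(\norm{\cdot}_{B}\) \dots\ is immediate from the triangle inequality'', is not right as stated. The triangle inequality yields \(\norm{\cdot}_{B}\)-continuity of \(\norm{\cdot}_{B}\), not lower semicontinuity under pointwise a.e.\ convergence, which is what the closedness step actually needs. Your primary Fatou argument is the correct one, and it is legitimate in the paper's setting because the proof of the triangle inequality for \(\norm{\cdot}_{B}\) already tacitly requires \(d\mu\) (hence \(G\)) to be a non-negative measure --- it treats \(d\nu(x)=\int (u+v)^{2}(y)\,d\mu(x,y)\) as a measure. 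So drop the parenthetical and the proof stands.
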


This is a consequence of \cite{Liu:1969ly}, Cor.~1.8.; we recall that the maximum of two norms is equivalent to their sum. We can also give a direct proof:

\begin{proof}
	By results of Bourbaki, Kakutani, Shmulyan and Eberlein,\footnote{\cite{Pietsch:2007zr}, p.~80, 3.4.3.6f.} it suffices to check that each bounded sequence has a weakly convergent subsequence. Boundedness implies that \( \norm{u_{m}}_{H} \) and \( \norm{u_{m}}_{B} \) are both bounded separately. Since \( H^{1} \) is reflexive, the Banach--Alaoglu theorem implies it is weakly compact, so there is a subsequence \( (u_{m'}) \subseteq (u_{m}) \) that converges weakly in \( H^{1} \). But by our assumption, this subsequence is also bounded in \( L^{2B} \), which uniformly convex and so reflexive, and we can apply Banach--Alaoglu to extract another subsequence \( (u_{m''}) \subseteq (u_{m'}) \) which is weakly convergent in \( L^{2B} \). But \( (u_{m''}) \) is also weakly convergent in \( H^{1} \), so it is weakly convergent in both spaces and hence in their intersection \(\cB\).
\end{proof}

\begin{lemma}
\label{thm:Cposdef}
	\begin{equation}
		C[f] = \iint_{xy>0} f(x) f(y) \min{\{ \abs{x}, \abs{y} \}} \, dx \, dy
	\end{equation}
	is positive-definite.
\end{lemma}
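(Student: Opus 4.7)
The plan is to split the domain of integration along the sign of \(x\) and \(y\) and use the elementary identity \(\min\{a,b\} = \int_0^\infty \mathbf{1}_{t<a}\,\mathbf{1}_{t<b}\,dt\) (valid for \(a,b > 0\)) to write each half as an integral of a square.

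Concretely, since the integration is restricted to \(xy>0\), we have
\begin{equation*}
	C[f] = \int_0^\infty\!\!\int_0^\infty f(x)f(y)\min\{x,y\}\,dx\,dy + \int_{-\infty}^0\!\!\int_{-\infty}^0 f(x)f(y)\min\{|x|,|y|\}\,dx\,dy.
\end{equation*}
For \(x,y>0\), substitute \(\min\{x,y\}=\int_0^\infty \chi_{(t,\infty)}(x)\chi_{(t,\infty)}(y)\,dt\); for \(x,y<0\), use \(\min\{|x|,|y|\}=\int_0^\infty \chi_{(-\infty,-t)}(x)\chi_{(-\infty,-t)}(y)\,dt\). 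Then Fubini/Tonelli (applied to the resulting non-negative integrands after the square is taken, or noting that for \(f \in L^1\) the inner integrals \(\int_t^\infty f\) and \(\int_{-\infty}^{-t} f\) exist and are bounded uniformly in \(t\)) yields
\begin{equation*}
	C[f] = \int_0^\infty \left( \int_t^\infty f(x)\,dx \right)^{2} dt + \int_0^\infty \left( \int_{-\infty}^{-t} f(x)\,dx \right)^{2} dt,
\end{equation*}
which is manifestly non-negative.

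For the strict positive-definiteness, suppose \(C[f]=0\). Both squared integrands must vanish for a.e.\ \(t>0\), giving \(F_+(t):=\int_t^\infty f=0\) and \(F_-(t):=\int_{-\infty}^{-t} f=0\) almost everywhere, hence everywhere by continuity if \(f \in L^1\). Differentiating in \(t\) yields \(f=0\) almost everywhere on \((0,\infty)\) and on \((-\infty,0)\), hence on \(\reals\).

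I expect no substantial obstacle: the only mildly delicate point is the switch in order of integration, which is painless since after introducing the indicator representation the integrand is non-negative, so Tonelli applies with no integrability hypothesis beyond what is needed for the iterated integral of \(f\) itself to exist. The identity \(\min\{a,b\} = \int_0^\infty \mathbf{1}_{t<a}\mathbf{1}_{t<b}\,dt\) is the key structural observation that converts the kernel into a diagonal form and thereby exhibits \(C\) as the sum of squared \(L^2\)-norms of the tail integrals of \(f\).
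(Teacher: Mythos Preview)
Your proof is correct and follows essentially the same route as the paper: both reduce \(C\) to the form \(\int_0^\infty \bigl(\int_t^\infty f\bigr)^2 dt + \int_0^\infty\bigl(\int_{-\infty}^{-t} f\bigr)^2 dt\), the paper by writing \(y=\int_0^y dz\) and permuting a triple integral, you by the equivalent indicator identity \(\min\{a,b\}=\int_0^\infty \mathbf{1}_{t<a}\mathbf{1}_{t<b}\,dt\). The strict-definiteness argument (vanishing tails force \(f=0\) a.e.) is likewise the same as the paper's.
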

This is clear after considering the alternative form of \( C \),
\begin{equation}
	\int_{0}^{\infty} \left( \int_{z}^{\infty} f(x) \, dx \right)^{2} \, dz,
\end{equation}
which we derive in \S~\ref{sec:formspoten}.

\section{The one-dimensional hydrogen-like atom}

With the preliminary constructions complete,\footnote{Pun not intended} we shall examine the functional \eqref{eq:energy} in detail, and derive a more useful expression for the Coulomb term.

Inserting the point charge distribution \autoref{eq:rhodef}, the energy functional becomes
\begin{align}
	E[u] &= \int_{\reals} \abs{u'(x)}^{2} \, dx + \frac{1}{2} \int_{\reals} \int_{\reals} -\abs{x-y} (u(x))^{2} (u(y))^{2} \, dx \, dy + z \int_{\reals} \abs{x} (u(x))^{2} \, dx \\
	&=: \norm{u'}_{2}^{2} + C[u^{2}].
\end{align}

\begin{note}
	The \( \rho\rho \) term is fortunately absent, as we might expect: we may see this by taking a sequence of approximants \( \delta_{n}(x) := n \phi(nx) \), where \(\phi\) is a smooth, compactly supported function with \( \int \phi = 1 \). Then \( \delta_{n} \to \delta \) as a distribution, but one can apply a scaling argument and the triangle inequality to show that
	\begin{equation}
		\iint \abs{x-y} \delta_{n}(x) \delta_{n}(y) \, dx \, dy = O\left( \frac{1}{n} \right) \quad \text{as }n \to \infty.
	\end{equation}
\end{note}

At this point we have a number of tricks to apply: the single integral in \(C\) can be expressed as a symmetric double integral by recalling that \( \norm{u}_{2} = 1 \) and exploiting the symmetry inherent in the new integrand:
\begin{align*}
	2\int_{\reals} \abs{x} (u(x))^{2} \, dx &= 2 \left( \int_{\reals} (u(y))^{2} \, dy \right) \left( \int_{\reals} \abs{x} (u(x))^{2} \, dx \right) \\
	&= 2 \int_{\reals} \int_{\reals} \abs{x} (u(x))^{2} (u(y))^{2} \, dx \, dy \\
	&= \int_{\reals} \int_{\reals} (\abs{x}+\abs{y}) (u(x))^{2} (u(y))^{2} \, dx \, dy.
\end{align*}

Now, given this expression, we have
\begin{equation}
	C[u^{2}] = \frac{1}{2} \int_{\reals} \int_{\reals} [z(\abs{x}+\abs{y})-\abs{x-y}] (u(x))^{2} (u(y))^{2} \, dx \, dy;
\end{equation}
since we are in one dimension, the term in square brackets can be written as
\begin{equation}
	2g(x,y) := z(\abs{x}+\abs{y})-\abs{x-y} = (z-1)(\abs{x}+\abs{y}) + 2G(x,y),
\end{equation}
where
\begin{equation}
	G(x,y) := \frac{1}{2} \left( \abs{x}+\abs{y}-\abs{x-y} \right) = \begin{cases}
		\min{\{\abs{x},\abs{y}\}} & xy>0 \\
		0 & xy < 0
	\end{cases};
\end{equation}
there are now three cases:

\subsection{\texorpdfstring{\(z<1\)}{z<1}}
\label{sec:z<1counterexample}
We shall show that \(E\) is not bounded below on \( \cX \). Consider, for example, the functions
\begin{equation*}
	(u_{n}(x))^{2} = \frac{(1+n)^{3}}{2n^{3}} \left( \frac{1}{(1+\abs{x})^{2}}-\frac{1}{(1+n)^{2}} +\frac{2(\abs{x}-n)}{(1+n)^{3}}  \right) \chi_{[-n,n]}(x)
\end{equation*}
It is easy to check that these functions are all in \( \cX \):\footnote{Recall from \eqref{eq:spacedefns} that \(\cX\) is the subset of \( H^{1} \) with \( \int_{\reals} \abs{x} u(x)^{2} \, dx < \infty \).} the latter terms ensure that \(u_{n}^{2}( \pm n + \varepsilon ) = O(  \varepsilon^{2} ) \), and hence \(u'_{n}\) is appropriately square-integrable; a similar calculation verifies that \( \norm{u_{n}}_{2}=1 \). By explicit computation, we find that
\begin{equation*}
	C[u_{n}^{2}] =  (z-1) \log{(n+1)} + O(1)
\end{equation*}
as \(n\to \infty \), and in particular, if \( z<1 \), the first term can be made as negative as desired. On the other hand, the kinetic term is \( 1/6 + O(1/n) \), and hence \(E\) can be made as negative as desired. Therefore \(E\) is not bounded below on \( \cX \), and can admit no global minimiser.

\subsection{\texorpdfstring{\(z>1\)}{z>1}}
The triangle inequality shows that \( G(x,y) \geq 0 \), and therefore
\begin{equation}
	C[u^{2}] \geq \frac{1}{2}(z-1) \iint (\abs{x}+\abs{y}) (u(x))^{2} (u(y))^{2} \, dx \, dy = (z-1) \int \abs{x} (u(x))^{2} \, dx,
\end{equation}
so \( C[u^{2}] \) is coercive and positive on \( \cX \); moreover, it follows that the entire energy functional \( E \) is coercive and positive on \( \cX \).

\begin{lemma}
\label{thm:z>1wlsc}
	When \( z>1 \), \( E[u] \) is weakly lower-semicontinuous (WLSC) on \( \cX \).
\end{lemma}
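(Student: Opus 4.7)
The plan is as follows. Suppose $(u_n) \subset \cX_1$ with $u_n \rightharpoonup u$ in $\cX$; by the compact embedding $\cX \Subset L^2(\reals)$ furnished by \autoref{thm:XHilbert}, a subsequence converges strongly in $L^2$, whence $\|u\|_2 = 1$, and a further subsequence converges pointwise almost everywhere on $\reals$. Because weak lower-semicontinuity is an inequality against the liminf, it suffices to establish $E[u] \leq \lim E[u_{n_k}]$ along a subsequence on which $E[u_{n_k}] \to \liminf E[u_n]$ and to which the above compactness extractions have been applied.

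The kinetic contribution is handled by the general fact that the Hilbertian norm is weakly lower-semicontinuous: weak convergence in $\cX$ entails weak convergence of $u_n'$ in $L^2$, giving $\|u'\|_2^2 \leq \liminf \|u_n'\|_2^2$.

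For the Coulomb contribution, I appeal to the rewrite derived earlier in the paper: because $\|u_n\|_2 = \|u\|_2 = 1$,
\begin{equation*}
C[u^2] = \iint_{\reals^2} g(x,y) \, u(x)^2 u(y)^2 \, dx \, dy, \qquad 2g(x,y) = (z-1)(\abs{x}+\abs{y}) + 2G(x,y),
\end{equation*}
and the assumption $z > 1$, combined with $G \geq 0$ (immediate from the triangle inequality $\abs{x-y} \leq \abs{x}+\abs{y}$), ensures that $g$ is pointwise non-negative. Fatou's lemma applied on $\reals^2$ to the non-negative integrand $g(x,y)\, u_n(x)^2 u_n(y)^2$, using the a.e.\ convergence $u_n(x)^2 u_n(y)^2 \to u(x)^2 u(y)^2$, produces $C[u^2] \leq \liminf C[u_n^2]$; summing with the kinetic bound gives the lemma.

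The principal obstacle --- and the reason the hypothesis $z>1$ enters at all --- is the sign of the kernel. In its original form, $C$ contains the term $-\tfrac{1}{2}\iint\abs{x-y} u^2 u^2$, for which Fatou is unavailable, while $z\int\abs{x}u^2$ is unpaired. Only after using $\|u\|_2 = 1$ to recombine the two into a single double integral does one obtain a kernel with a definite sign, and that sign is non-negative exactly when $z>1$; everything else reduces to routine compactness from \autoref{thm:XHilbert} and the weak lower-semicontinuity of the $L^2$-norm.
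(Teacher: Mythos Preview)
Your argument is correct and follows the same route as the paper: the kinetic term is handled by weak lower-semicontinuity of the norm, and for the Coulomb term you pass to a pointwise-a.e.\ convergent subsequence and apply Fatou's lemma to the non-negative integrand $g(x,y)\,u_n(x)^2 u_n(y)^2$. Your treatment is in fact slightly more explicit than the paper's about the subsequence bookkeeping (first extracting a subsequence along which $E[u_{n_k}] \to \liminf E[u_n]$) and about the role of the constraint $\|u_n\|_2 = 1$ in making the kernel $g$ available.
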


\begin{proof}
	It is well-known that the kinetic term \( \norm{u'}_{2}^{2} \) is WLSC, so it suffices to check that the potential term \( B[u^{2}] \) is. Given a sequence \( (u_{m}) \subset \cX \), \( u_{m} \tow u \) in \( \cX \), we can take a subsequence \( m' \subseteq m \) so that \( u_{m'} \to u \) pointwise a.e..\footnote{See, e.g. Lieb and Loss, \cite{lieb2001analysis}, Corollary 8.7 (p.212)} Hence \( ( (x,y) \mapsto (u_{m}(x))^{2}(u_{m}(y))^{2} g(x,y) ) \) is a positive sequence of functions converging a.e. to \( ( (x,y) \mapsto (u(x))^{2}(u(y))^{2} g(x,y) ) \), and Fatou's lemma implies that
	\begin{equation}
		\liminf_{m} \iint (u_{m}(x))^{2}(u_{m}(y))^{2} g(x,y) \, dx \, dy \geq \iint (u(x))^{2}(u(y))^{2} g(x,y) \, dx \, dy,
	\end{equation}
	i.e. \( \liminf_{m} B[u_{m}^{2}] \geq B[u^{2}] \), as required.
\end{proof}

\begin{theorem}
\label{thm:z>1exists}
	If \( z>1 \), \( E \) has a minimiser in \( \cX_{1} \), i.e. if \( \liminf_{v \in \cX_{1}} E[v] = e_{0} \), \( \exists u \in \cX_{1} \) such that \( E[u] = e_{0}\).
\end{theorem}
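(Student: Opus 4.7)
The plan is the classical direct method of the calculus of variations, which is now streamlined by the preparatory results. I would begin by taking a minimising sequence \( (u_{m}) \subset \cX_{1} \) with \( E[u_{m}] \to e_{0} \); since \( E \) is coercive and positive on \( \cX \) by the inequality \( C[u^{2}] \geq (z-1) \int \abs{x} u^{2} \, dx \) combined with the kinetic term and the \( L^{2} \) normalisation (which controls \( \norm{u}_{H^{1}}^{2} \) once \( \norm{u'}_{2}^{2} \) is bounded), the sequence is bounded in \( \cX \).

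Next, since \( \cX \) is a Hilbert space by \autoref{thm:XHilbert}, it is reflexive, and so after extracting a subsequence (still denoted \( u_{m} \)) we obtain \( u_{m} \tow u \) weakly in \( \cX \) for some \( u \in \cX \). The Rellich-type embedding \( \cX \Subset L^{2}(\reals) \), also from \autoref{thm:XHilbert}, lets us pass to a further subsequence that converges \emph{strongly} in \( L^{2} \). This strong convergence immediately yields \( \norm{u}_{2} = \lim_{m} \norm{u_{m}}_{2} = 1 \), so the candidate minimiser lies in \( \cX_{1} \) — this is the step where the compactness built into \( \cX \) (via the first absolute moment weight) pays for itself, and without it the norm constraint could be lost in the weak limit.

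Finally, weak lower semicontinuity finishes the argument: by \autoref{thm:z>1wlsc}, \( E \) is WLSC along the weakly convergent sequence, so
\begin{equation*}
	E[u] \leq \liminf_{m \to \infty} E[u_{m}] = e_{0}.
\end{equation*}
Since \( u \in \cX_{1} \) we also have \( E[u] \geq e_{0} \) by definition of the infimum, hence \( E[u] = e_{0} \), and \( u \) is the desired minimiser.

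The main obstacle is really already dispatched by the earlier preparation: the delicate point — ensuring the \( L^{2} \) constraint survives passage to the limit — is exactly what the compact embedding \( \cX \Subset L^{2} \) provides. This is the reason for introducing the weighted space \( \cX \) rather than working on \( H^{1} \) directly, and it is precisely this feature that will have to be replaced by a more intricate construction in the borderline case \( z=1 \).
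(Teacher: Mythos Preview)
Your proposal is correct and follows essentially the same route as the paper: take a minimising sequence, use coercivity to get boundedness in \(\cX\), extract a weak limit (via reflexivity of the Hilbert space \(\cX\)), upgrade to strong \(L^{2}\) convergence via \(\cX \Subset L^{2}\) to retain the constraint \(\norm{u}_{2}=1\), and conclude with weak lower semicontinuity. The paper's proof is organised identically, differing only in that it lists pointwise a.e.\ convergence alongside the other modes (this is what feeds into the Fatou argument behind \autoref{thm:z>1wlsc}, which you are invoking as a black box anyway).
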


\begin{proof}
	\( E \) is weakly lower-semicontinuous, bounded below, and coercive on \( \cX \) (see above), and by \autoref{thm:XHilbert}, \( \cX_{1} \) is a compact subset of \( L^{2}(\reals) \). It follows that
	\begin{enumerate}
		\item
		the infimum \( e_{0} \) exists and is positive,
		\item
		there is a minimising sequence \( (u_{m}) \subset \cX_{1} \)
		\item
		we can take a subsequence \( m' \subseteq m \) such that \( u_{m'} \) converges to \(u\)
		\begin{enumerate}
			\item
			weakly in \( \cX \),
			\item
			strongly in \( L^{2} \),
			\item
			pointwise almost everywhere.
		\end{enumerate}
	\end{enumerate}
	From the strong convergence it follows that \( u \in \cX_{1} \), so \( E[u] \geq e_{0} \), but by the weak lower-semicontinuity \( E[u] \leq e_{0} \). Hence \( E[u] = e_{0} \), as required.
\end{proof}

In the next section we consider the \(z=1\) case, it being the most subtle.

\section{\texorpdfstring{\(z=1\)}{z=1}}
Here, we have 
\begin{equation}
	C[u^{2}] = \frac{1}{2}\int_{\reals} \int_{\reals} [\abs{x}+\abs{y}-\abs{x-y}] (u(x))^{2} (u(y))^{2} \, dx \, dy,
\end{equation}
and now the triangle inequality implies that \(C[u^{2}] \geq 0 \), and hence \( E[u] \) is bounded below by \(0\). Further, the term in square brackets can be written as
\begin{equation}
	\abs{x}+\abs{y}-\abs{x-y} = \begin{cases}
		\min{\{\abs{x},\abs{y}\}} & xy>0 \\
		0 & xy \leq 0
	\end{cases}.
\end{equation}

We therefore have a curious decoupling result between the distribution of \(u\) on the positive and negative parts of the axis:
\begin{align}
	C[u^{2}] &= \iint_{x,y>0} \min{\{x,y\}} (u(x))^{2} (u(y))^{2} \, dx \, dy + \iint_{x,y<0} \min{\{-x,-y\}} (u(x))^{2} (u(y))^{2} \, dx \, dy \\
	&=: C_{+}[u^{2}] + C_{-}[u^{2}], \notag
\end{align}
say. It is also clear that if we write \( f_{R}(x) = f(-x) \), then \( C[f] = C_{+}[f] + C_{+}[f_{R}] \).

Since \(\min{\{\abs{x},\abs{y}\}} \leq \abs{x}\), we also have
\begin{align}
	C[u^{2}] &\leq \left( \int_{0}^{\infty} \abs{x} (u(x))^{2} \, dx \right) \left( \int_{0}^{\infty} (u(y))^{2} \, dy \right) + \left( \int_{-\infty}^{0} \abs{x} (u(x))^{2} \, dx \right) \left( \int_{-\infty}^{0} (u(y))^{2} \, dy \right) \\
	&\leq \int_{-\infty}^{\infty} \abs{x} (u(x))^{2} \, dx,
\end{align}
so certainly \(C\) is finite on \(\cX\).

However, since \( C \) may be finite when the \(\cX\)-norm is not, it is more logical to work on the larger space \(\cB\) where \(C\) is finite.

\begin{remark}
	Since \(\min{\{\abs{x},\abs{y}\}} \leq \abs{x}^{1/2}\abs{y}^{1/2} \), we also have the bound
\begin{equation*}
	C[u^{2}] \leq \left( \int \abs{x}^{1/2} (u(x))^{2} \, dx \right)^{2};
\end{equation*}
however, the subspace of \( H^{1} \) where the right hand side is finite, which we may call by analogy \( \cX^{1/2} \), is in fact a proper subspace of \( \cB \): consider the function \[ T(x) := \int_{x}^{\infty} (u(y))^{2} \, dy := \begin{cases}
		\frac{(1+x/e)^{-1/2}}{(\log{(e+x)})^{n}} & x>0 \\
		1 & \text{else}
	\end{cases}, \]
for example. With appropriate smoothing at the origin, this \(u\) can be admitted to \( H^{1}(\reals) \); if \( 1/2<n\leq 1 \), it is then in \( \cB \), but not \( \cX^{1/2} \). Hence \( \cX^{1/2} \subset \cB \), but not \emph{vice versa}.
\end{remark}

\subsection{Forms of the potential energy}
\label{sec:formspoten}

From the discussion in the previous section, it is clear that the pertinent functional to study is \( C_{+} \), since the full potential energy functional is just a sum of two different versions of \( C_{+} \). We therefore shall work with \( C_{+}[f] \) for \( f \in \fB \) (recall \eqref{eq:spacedefns}) with the norm functional \( B=C \),
\begin{equation*}
	C_{+}[f] = \int_{0<x,y<\infty} f(x) f(y) \min{\{x,y\}} \, dx \times dy.
\end{equation*}
Since by the definition of \( \fB \) \( C_{+}[\abs{f}]<\infty \), Fubini's theorem will allow us to write \( C_{+}[f] \) in a number of ways, by expressing it as an iterated integral and permuting the order of integration.

Firstly, by the symmetry of the integrand, we can always split \( C_{+} \) so that we have
\begin{equation}
	C_{+}[f] = 2\iint_{0<y<x<\infty} f(x) f(y) y \, dx \times dy.
\end{equation}
Now, we can write \( y = \int_{0}^{y} dz \), and then the integral becomes the triple integral
\begin{equation}
	\infty > C_{+}[f] = 2\iiint_{0<z<y<x<\infty} f(x) f(y) \, dx \times dy \times dz;
\end{equation}
at this point we can conclude that \( f(x) f(y) \in L^{1}((x,y,z) \in \reals^{3} : 0<z<y<x<\infty) \). Hence we can pass to an iterated integral: there are \( 3!=6 \) possible orders of integration from which to choose, but there is some symmetry, which actually reduces the number of distinct forms we obtain to four:
\begin{align*}
	&2\int_{0}^{\infty} f(x) \left( \int_{0}^{x} y f(y) \, dy \right) \, dx,	&&2\int_{0}^{\infty} y f(y) \left( \int_{y}^{\infty} f(x) \, dx \right) \, dy, \\
	&\int_{0}^{\infty} \left( \int_{z}^{\infty} f(x) \, dx \right)^{2} \, dz,	&&\int_{0}^{\infty} f(x) \left( \int_{0}^{x} \left( \int_{z}^{\infty} f(y) \, dy \right) \, dz \right) \, dx;
\end{align*}
the latter is interesting in that it does not emerge by itself, but we have
\begin{align*}
	\int_{0}^{\infty}& f(x) \left( \int_{0}^{x} \left( \int_{z}^{x} f(y) \,  dy \right) dz \right) \, dx = \int_{0}^{\infty} f(x) \left( \int_{0}^{x} \left( \int_{z}^{\infty} f(y) \, dy - \int_{x}^{\infty} f(y) \, dy \right) \, dz \right) dx \\
	&= \int_{0}^{\infty} f(x) \left( \int_{0}^{x} \left( \int_{z}^{\infty} f(y) \, dy \right) \, dz \right) dx - \int_{0}^{\infty} x f(x) \left( \int_{x}^{\infty} f(y) \, dy \right) dx,
\end{align*}
and we recognise the second term as equal to the negative of the left-hand side (by Tonelli's theorem: cf. the upper-right integral, to which it is identical). Adding it to both sides, we obtain the stated result. Indeed, this argument works even if both sides are infinite, by taking the upper limit to be a large finite constant \(K\) rather than \(\infty\), then taking the limit subsequent to the manipulations.

The most useful of these forms for our purposes is the bottom-left one: it demonstrates the positivity of \( C_{+} \) on nonzero functions (the internal integral can only be zero almost everywhere if \( f \) is zero almost everywhere, whence the entire expression can only be zero if \( f \) is zero almost everywhere, which is precisely nondegeneracy), and hence \( C_{+} \) does satisfy \autoref{thm:Cposdef}; we shall also use this later in the proof that \( \cB \Subset L^{2} \).

\subsection{Norm Convergence: A Concentration--Compactness-Type Argument}
Define the concentration functional, \( Q: L^{1}(\reals) \times [0,\infty) \to [0,1] \), by
\begin{equation}
	Q[f](r) = \int_{-r}^{r} f(x) \, dx.
\end{equation}

\begin{lemma}
	Let \( (u_{m}) \) be a sequence \( \subset L^{2}(\reals) \) with \( \norm{u_{m}}_{2} = 1 \), and write \( Q_{m} = Q[u_{m}^{2}]\). Then there is a subsequence, \( (Q_{m'}) \subseteq (Q_{m}) \), that converges almost everywhere to a nondecreasing function \( Q:[0,\infty) \to [0,1]\). This may be chosen to be continuous from the left, so that
	\begin{equation}
	\label{eq:Qliminf}
		Q(r) \leq \liminf_{m'} Q_{m'}(r),
	\end{equation}
	and there is a \( \lambda \in [0,1] \) such that \( Q(r) \to \lambda \) as \( r \to \infty \).
\end{lemma}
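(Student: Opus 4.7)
The key observation is that each $Q_m$ is a nondecreasing function from $[0,\infty)$ into $[0,1]$: nondecreasing because $u_m^2 \geq 0$, and bounded above by $1$ because $\norm{u_m}_2 = 1$. This puts us in the classical setting of Helly's selection theorem. The strategy is therefore to extract a pointwise (everywhere on the rationals) convergent subsequence by a diagonal argument, define $Q$ by left-continuous extension, and then verify the stated properties.

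Concretely, I would enumerate the rationals in $[0,\infty)$ as $(q_j)_{j \geq 1}$, and apply Bolzano--Weierstrass successively at each $q_j$, extracting nested subsequences. The standard diagonal argument yields a single subsequence $(u_{m'})$ such that $Q_{m'}(q)$ converges to some value $\tilde Q(q) \in [0,1]$ for every rational $q \geq 0$. Since each $Q_{m'}$ is nondecreasing, so is $\tilde Q$ on the rationals. I then define
\begin{equation*}
    Q(r) := \sup \bigl\{ \tilde Q(q) : q \in \mathbb{Q} \cap [0,r) \bigr\}.
\end{equation*}
By construction $Q$ is nondecreasing, left-continuous, and takes values in $[0,1]$. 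The left-continuity is immediate from the supremum definition.

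For the liminf inequality \eqref{eq:Qliminf}: given $r > 0$ and $q \in \mathbb{Q}$ with $q < r$, monotonicity of each $Q_{m'}$ gives $Q_{m'}(q) \leq Q_{m'}(r)$, so $\tilde Q(q) = \lim_{m'} Q_{m'}(q) \leq \liminf_{m'} Q_{m'}(r)$; taking the supremum over such $q$ yields $Q(r) \leq \liminf_{m'} Q_{m'}(r)$. For a.e.\ convergence: since $Q$ is monotone, it has at most countably many points of discontinuity. At any continuity point $r$, one sandwiches $Q_{m'}(r)$ between $Q_{m'}(q)$ and $Q_{m'}(q')$ for rationals $q < r < q'$ arbitrarily close to $r$; letting $m' \to \infty$ and then $q, q' \to r$ gives $Q_{m'}(r) \to Q(r)$. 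Finally, since $Q$ is nondecreasing and bounded above by $1$, the limit $\lambda := \lim_{r \to \infty} Q(r) \in [0,1]$ exists automatically.

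The only mildly delicate point is the compatibility of the left-continuous modification with the liminf bound, but as shown this is handled cleanly by testing against rationals strictly less than $r$; no part of the argument is really an obstacle, and the proof is essentially Helly's selection theorem tailored so that the limit function is well-defined pointwise and satisfies the semicontinuity needed for later concentration-compactness applications.
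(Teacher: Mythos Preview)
Your proposal is correct and follows essentially the same route as the paper: both recognise that the $Q_m$ are uniformly bounded nondecreasing functions and invoke Helly's selection theorem, then adjust the limit to be left-continuous and deduce the $\liminf$ bound by testing at points strictly below $r$. The only difference is cosmetic: you spell out the diagonal extraction on the rationals and the sandwich at continuity points, whereas the paper simply cites Helly and writes the $\liminf$ inequality as the chain $Q(r)=\sup_{s<r}Q(s)=\sup_{s<r}\liminf_{m'}Q_{m'}(s)\leq\liminf_{m'}\sup_{s<r}Q_{m'}(s)\leq\liminf_{m'}Q_{m'}(r)$.
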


\begin{proof}
	As in \cite{Ambrosetti:2007db}, p.~252ff., and because the \( u_{m} \) are positive, the \( Q_{m} \) are all nondecreasing functions \( [0,\infty) \to [0,1] \), with \( \lim_{r \to \infty} Q_{m}(r) = 1 \). Therefore a Helly's selection argument shows that we can find a subsequence \( m' \subseteq m \) such that \( Q_{m'} \) converges to a function \( Q \) almost everywhere on \( [0,\infty) \): in particular, \(Q\) maps \( [0,\infty) \to [0,1] \), is nondecreasing, and, being nondecreasing, is continuous on the complement of a countable set, and hence can be adjusted to be continuous from the left, so
	\begin{equation*}
		Q(r) = \sup_{s<r} Q(s) = \sup_{s<r} \left( \liminf_{m'} Q_{m'}(s) \right) \leq \liminf_{m'} \left( \sup_{s<r} Q_{m'}(s) \right) \leq \liminf_{m'} Q_{m'}(r)
	\end{equation*}
	Being nondecreasing and bounded between \(0\) and \(1\), \(Q(r)\) also has a definite limit in \( [0,1] \) as \(r \to \infty\).\footnote{\cite{Korner:2004fk}, p.~9, for example.}
\end{proof}

\begin{proposition}
\label{thm:minxytightness}
	Let \( (u_{m}) \) be a sequence of functions in \( \cB \), with \( C[u_{m}] \) bounded above uniformly and \(\norm{u_{m}}_{2}=1\). Then there is a subsequence \( m' \subseteq m \) so that for any \( \varepsilon>0 \), there is \( R>0 \) such that
	\begin{equation*}
		\int_{\abs{x}>R} u_{m'}^{2} < \varepsilon
	\end{equation*}
\end{proposition}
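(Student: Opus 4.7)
The plan is to exploit the positive, nonincreasing representation of \(C_+\) derived in \S~\ref{sec:formspoten}, namely
\[
C_+[f] = \int_0^\infty \left( \int_z^\infty f(y) \, dy \right)^2 \, dz,
\]
together with the decomposition \(C[u^2] = C_+[u^2] + C_+[u_R^2]\) noted just above the proposition. Given the sequence \((u_m)\), I would define the one-sided tail functions
\[
T_m^+(z) := \int_z^\infty u_m(y)^2 \, dy, \qquad T_m^-(z) := \int_{-\infty}^{-z} u_m(y)^2 \, dy,
\]
for \(z \geq 0\). Both are nonnegative, bounded above by \(1\), and nonincreasing in \(z\). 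In this notation the uniform hypothesis reads
\[
\int_0^\infty (T_m^+(z))^2 \, dz + \int_0^\infty (T_m^-(z))^2 \, dz = C[u_m^2] \leq M
\]
for some \(M\) independent of \(m\).

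The key observation is then a single-line monotonicity estimate: because \(T_m^\pm\) is nonincreasing, for every \(R > 0\)
\[
R \, (T_m^\pm(R))^2 \leq \int_0^R (T_m^\pm(z))^2 \, dz \leq M,
\]
so \(T_m^\pm(R) \leq \sqrt{M/R}\). Adding the two estimates gives the uniform tail bound
\[
\int_{\abs{x}>R} u_m(y)^2 \, dy = T_m^+(R) + T_m^-(R) \leq 2\sqrt{M/R},
\]
which already holds for the whole sequence with no extraction. Given \(\varepsilon > 0\), any choice \(R > 4M/\varepsilon^2\) suffices; the subsequence in the statement can be taken to be the one produced by the previous lemma so that \(Q_{m'}\) also converges to some \(Q\), which then automatically satisfies \(Q(r) \to 1\).

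There is no real obstacle here beyond recognising that the bottom-left form of \(C_+\) converts the hypothesis into an \(L^2\) bound on a monotone function. The only small subtlety is ensuring that \(T_m^\pm\) is honestly monotone and finite, which is immediate from \(u_m \in L^2\). Once the representation is in hand, tightness follows from the trivial inequality that a monotone nonincreasing nonnegative function is pointwise bounded by a multiple of its \(L^2\) norm on \([0,R]\) divided by \(\sqrt R\); this plays here the role that the Kolmogorov–Riesz or Rellich criterion plays in the \(\cX\) case.
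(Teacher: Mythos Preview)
Your argument is correct, and in fact it is sharper and more direct than the paper's own proof. The paper proceeds by a concentration--compactness style contradiction: it first invokes the preceding Helly-type lemma to extract a subsequence along which the concentration functions \(Q_{m'}\) converge to some \(Q\) with \(Q(r)\to\lambda\), then assumes \(\lambda<1\), chooses \(R_m\to\infty\) with \(Q_m(R_m)\leq\lambda+\varepsilon\), and bounds \(C[u_m^2]\) from below by \(\tfrac{1}{4}R_m(1-\lambda-\varepsilon)^2\) using exactly the monotonicity of the tail integrands that you isolate. The contradiction forces \(\lambda=1\), and tightness then follows indirectly from \eqref{eq:Qliminf}.

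You bypass all of this: the same monotonicity estimate, applied forwards rather than inside a contradiction, gives the explicit uniform bound \(T_m^\pm(R)\leq\sqrt{M/R}\) for the entire sequence, so no subsequence extraction is needed and the Helly lemma is never invoked. This is strictly stronger than what the proposition claims and makes the proof essentially two lines. The only cost is that your argument is tied to the specific ``bottom-left'' representation of \(C_+\), whereas the paper's framing is written so as to motivate the more general tightness criteria developed in the Appendix; but even there the Appendix proofs are closer in spirit to your direct estimate than to the concentration--compactness argument in the main text.
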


\begin{proof}
	Following ideas from the dichotomy case of the Concentration--Compactness Principle,\footnote{\cite{Lions:1984rw,Lions:1984dq}} we shall prove that \( \lambda < 1 \) contradicts the uniform boundedness of \( C[u_{m}^{2}] \). Since \( Q_{m}(r) \to Q(r) \) almost everywhere and \( Q(r) \to \lambda \), for a given \( \varepsilon>0 \) we can choose a sequence \( R_{m} \to \infty \) such that
	\begin{equation*}
		Q_{m}(R_{m}) \leq \lambda + \varepsilon
	\end{equation*}
	for sufficiently large \(m\). Therefore
	\begin{equation*}
		1 - Q_{m}(R_{m}) \geq (1- \lambda) - \varepsilon,
	\end{equation*}
	and by the pigeonhole principle, one of
	\begin{align*}
		P_{m}^{+} &:= \int_{R_{m}}^{\infty} u_{m}^{2} \, dx &  P_{m}^{-} &:= \int_{-\infty}^{-R_{m}} u_{m}^{2} \, dx
	\end{align*}
	is larger than \( (1-\lambda-\varepsilon)/2 \). From the previous section, \( C[f] \) can be written in the form
	\begin{equation}
		C[f] = \int_{0}^{\infty} \left[ \left( \int_{r}^{\infty} f(x) \, dx \right)^{2} + \left( \int_{r}^{\infty} f(-x) \, dx \right)^{2} \right] \, dr,
	\end{equation}
	and the integrand is nonincreasing, we have
	\begin{align*}
		C[u_{m}^{2}] &\geq \int_{0}^{R_{m}} \left[ \left( \int_{r}^{\infty} u_{m}^{2} \, dx \right)^{2} + \left( \int_{-\infty}^{-r} u_{m}^{2} \, dx \right)^{2} \right] \, dr \\
		&\geq R_{m} \left( (P_{m}^{+})^{2} + (P_{m}^{-})^{2} \right) \\
		&\geq \frac{R_{m}}{4} \left( (1-\lambda) - \varepsilon \right)^{2}.
	\end{align*}
	This cannot be bounded above unless \( \lambda = 1-\varepsilon \), but since \( \varepsilon \) can be as close to \(0\) as we like, we must have \( \lambda = 1 \).
	
	Now, since \( Q(r) \to 1 \) as \( r \to \infty \), for any \( \varepsilon > 0 \) there is an \( R \) so that \( Q(R) \geq 1-\varepsilon/2 \), and the specification in equation \eqref{eq:Qliminf} implies that \( Q_{m}(R) \geq 1-\varepsilon \) for \( m \) sufficiently large. 
\end{proof}

This proof can in fact be generalised to more functions: see the Appendix.

\begin{theorem}
\label{thm:BsubsubL2}
	\( \cB \Subset L^{2} \).
\end{theorem}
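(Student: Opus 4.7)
The plan is to verify the three conditions of the Kolmogorov--Riesz--Fr\'echet compactness theorem \cite{HancheOlsen2010385,Pego:1985fk}: given a weakly convergent (hence norm-bounded by the Banach--Steinhaus theorem) sequence \( (u_{m}) \subset \cB \), I would extract a subsequence converging strongly in \( L^{2}(\reals) \) by checking (i) uniform \( L^{2} \)-boundedness, (ii) uniform equicontinuity of translates in \( L^{2} \), and (iii) uniform smallness of the tails at infinity. Since the embedding \( \cB \hookrightarrow L^{2} \) is continuous and weak limits are unique, the strong \( L^{2} \)-limit will automatically coincide with the weak \( \cB \)-limit.

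Norm-boundedness in \( \cB \) gives \( \sup_{m} \norm{u_{m}}_{H^{1}} < \infty \) and \( \sup_{m} C[u_{m}^{2}] < \infty \) separately. Condition (i) is immediate from the \( H^{1} \)-bound. Condition (ii) follows from the elementary estimate
\begin{equation*}
	\int_{\reals} \abs{u(x+h) - u(x)}^{2} \, dx \leq h^{2} \norm{u'}_{2}^{2},
\end{equation*}
obtained by applying Cauchy--Schwarz to \( u(x+h) - u(x) = \int_{0}^{h} u'(x+s) \, ds \) and Fubini; the uniform \( H^{1} \)-bound then makes the right-hand side tend to \(0\) as \(h \to 0\) uniformly in \(m\). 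Condition (iii) is precisely the content of \autoref{thm:minxytightness}, which is designed to extract exactly this tail bound from the uniform control on \( C[u_{m}^{2}] \).

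The only obstacle I expect is that \autoref{thm:minxytightness} is stated for sequences normalised so \( \norm{u_{m}}_{2} = 1 \), whereas here we only have uniformly bounded \( L^{2} \)-norms. I would handle this by first extracting a subsequence along which \( \norm{u_{m}}_{2} \to \alpha \geq 0 \): if \( \alpha = 0 \) then \( u_{m} \to 0 \) strongly in \( L^{2} \) and the conclusion is trivial, while if \( \alpha > 0 \) the rescaled sequence \( \tilde{u}_{m} := u_{m}/\norm{u_{m}}_{2} \) has \( \norm{\tilde{u}_{m}}_{2} = 1 \) with \( C[\tilde{u}_{m}^{2}] = \norm{u_{m}}_{2}^{-4} C[u_{m}^{2}] \) still uniformly bounded (using eventually \( \norm{u_{m}}_{2} \geq \alpha/2 \)), so the proposition applies to \( \tilde{u}_{m} \) and tightness for \( u_{m} \) then follows by undoing the scaling.
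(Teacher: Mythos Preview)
Your proof is correct and follows essentially the same strategy as the paper: both invoke a Kolmogorov--Riesz type criterion, using the uniform \(H^{1}\)-bound for the high-frequency/equicontinuity condition and \autoref{thm:minxytightness} for the physical-space tail condition. The only cosmetic difference is that the paper quotes Pego's Fourier-side formulation (\autoref{thm:Pego-RK}), controlling \(\int_{\abs{k}>r}\abs{\tilde{u}}^{2}\) via \(\int k^{2}\abs{\tilde{u}}^{2}\leq\norm{u'}_{2}^{2}\), whereas you use the equivalent translate-equicontinuity form directly; your explicit treatment of the normalisation \(\norm{u_{m}}_{2}=1\) required by \autoref{thm:minxytightness} is in fact more careful than the paper's own argument, which glosses over this point.
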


This is a consequence of the following corollary of the Kolmogorov--Riesz compactness criterion:\footnote{\cite{Pego:1985fk}, quoted in \cite{HancheOlsen2010385}.}

\begin{proposition}
\label{thm:Pego-RK}
	Let \( \mathcal{F} \subset L^{2}(\reals^{d}) \) be such that \( \sup_{f \in \mathcal{F}} \norm{f}_{2} \leq M < \infty \). If
	\begin{equation*}
		\lim_{r \to \infty} \sup_{f \in \mathcal{F}} \int_{\abs{x}>r} \abs{f(x)}^{2} \, dx = 0 \quad \text{and} \quad \lim_{r \to \infty} \sup_{f \in \mathcal{F}} \int_{\abs{k}>r} \abs{\tilde{f}(k)}^{2} \, dk  = 0,
	\end{equation*}
	where \( \tilde{f} \) is the Fourier transform of \( f \), then \( \mathcal{F} \) is totally bounded in \( L^{2}(\reals^{d}) \).
\end{proposition}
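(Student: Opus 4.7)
The plan is to show that for any $\epsilon>0$, the family $\mathcal{F}$ is approximated in $L^{2}$ to within a multiple of $\epsilon$ by a set of uniformly bounded, equicontinuous, compactly supported continuous functions, which is then precompact in the sup norm by Arzelà--Ascoli, hence totally bounded in $L^{2}$. Transferring an $\epsilon$-net of the approximating set back to $\mathcal{F}$ completes the argument.

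First, I would use the two hypotheses to choose scales. Given $\epsilon>0$, select $r_{0}>0$ so that $\sup_{f\in \mathcal{F}} \int_{\abs{x}>r_{0}} \abs{f}^{2} < \epsilon^{2}$ and $R>0$ so that $\sup_{f\in \mathcal{F}} \int_{\abs{k}>R} \abs{\tilde{f}}^{2} < \epsilon^{2}$. Next, fix a Schwartz function $\varphi$ with $\hat{\varphi}$ compactly supported, $0 \leq \hat{\varphi} \leq 1$, and $\hat{\varphi} \equiv 1$ on $B_{R}$. Bandlimiting via convolution with $\varphi$ costs little in $L^{2}$: by Plancherel,
\begin{equation*}
	\norm{f - \varphi * f}_{2} = \norm{(1 - \hat{\varphi}) \tilde{f}}_{2} \leq \left( \int_{\abs{k}>R} \abs{\tilde{f}}^{2} \right)^{1/2} < \epsilon,
\end{equation*}
so truncating to $B_{r_{0}}$ as well yields $\norm{f - \chi_{B_{r_{0}}} (\varphi * f)}_{2} < 2\epsilon$ for every $f \in \mathcal{F}$.

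Second, I would verify that the collection $\mathcal{G} := \{\chi_{B_{r_{0}}}(\varphi * f) : f \in \mathcal{F}\}$ is totally bounded in $L^{2}(\reals^{d})$. Each $\varphi * f$ is smooth, and by Cauchy--Schwarz $\abs{(\varphi * f)(x)} \leq \norm{\varphi}_{2} \norm{f}_{2} \leq M \norm{\varphi}_{2}$, with the analogous bound $\abs{\nabla(\varphi * f)(x)} \leq M \norm{\nabla \varphi}_{2}$, uniformly in $f \in \mathcal{F}$. Hence the functions $\varphi * f$ restricted to $\overline{B_{r_{0}}}$ form a uniformly bounded, uniformly Lipschitz family, and Arzelà--Ascoli guarantees they are precompact in $C(\overline{B_{r_{0}}})$. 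The continuous embedding $C(\overline{B_{r_{0}}}) \hookrightarrow L^{2}(B_{r_{0}}) \hookrightarrow L^{2}(\reals^{d})$ (the first arrow has norm $\leq \abs{B_{r_{0}}}^{1/2}$) transfers total boundedness to $L^{2}(\reals^{d})$. Finally, since every element of $\mathcal{F}$ is within $2\epsilon$ of an element of the totally bounded set $\mathcal{G}$, a finite $\epsilon$-net of $\mathcal{G}$ gives a finite $3\epsilon$-net of $\mathcal{F}$; because $\epsilon>0$ was arbitrary, $\mathcal{F}$ is totally bounded.

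The main obstacle is ensuring that the three sources of error (physical-space tail, Fourier-space tail, and Arzelà--Ascoli approximation) combine cleanly without the constants blowing up. The key technical point is the uniform gradient bound $\abs{\nabla(\varphi * f)} \leq M\norm{\nabla \varphi}_{2}$, which requires only $\varphi \in H^{1}$ and the uniform $L^{2}$-bound on $\mathcal{F}$; once this equicontinuity is in hand, everything else reduces to bookkeeping of $\epsilon$'s. An alternative, slightly slicker route is to replace the sharp cutoff $\chi_{B_{r_{0}}}$ by a smooth bump supported in a slightly larger ball, which keeps everything in $C^{1}_{c}$ and avoids having to separately verify that $L^{2}(B_{r_{0}})$ embeds into $L^{2}(\reals^{d})$ with the right constant; I would choose whichever reads more cleanly in the final write-up.
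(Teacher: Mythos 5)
Your argument is correct, but it is worth noting that the paper does not prove this proposition at all: it is quoted as a known result of Pego \cite{Pego:1985fk} (see also \cite{HancheOlsen2010385}), whose own route is to convert the uniform Fourier-tail decay, via Plancherel, into uniform $L^{2}$-continuity of translations ($\norm{f(\cdot-y)-f}_{2}\to 0$ uniformly over $\mathcal{F}$ as $y\to 0$) and then invoke the classical Kolmogorov--Riesz compactness theorem. What you have written is instead a self-contained proof that essentially re-derives the relevant case of Kolmogorov--Riesz from scratch: band-limit by convolving with a Schwartz function whose Fourier transform is a cutoff equal to $1$ on $B_{R}$, truncate in physical space, and apply Arzel\`a--Ascoli to the resulting uniformly bounded, uniformly Lipschitz family on $\overline{B_{r_{0}}}$. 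The steps all check out --- the key estimates $\norm{f-\varphi*f}_{2}\leq(\int_{\abs{k}>R}\abs{\tilde{f}}^{2})^{1/2}$ and $\abs{\nabla(\varphi*f)}\leq M\norm{\nabla\varphi}_{2}$ are valid (modulo fixing a Fourier normalisation so that Plancherel is an isometry and $\widehat{\varphi*f}=\hat{\varphi}\tilde{f}$, which only perturbs constants), and total boundedness transfers correctly along the bounded map $C(\overline{B_{r_{0}}})\to L^{2}(\reals^{d})$. Your version buys a fully elementary, citation-free proof at the cost of a page of standard analysis; the paper's citation buys brevity and defers the work to the literature. Either is acceptable; if you include your proof, state explicitly the Fourier convention used and the existence of the band-limiting function $\varphi$ (inverse transform of a smooth bump), since those are the only points a careful reader would pause on.
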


\begin{proof}[Proof of Theorem]
	Let \( \cB_{M} = \{ u \in \cB : \norm{u}_{B} \leq M < \infty \} \), and consider a sequence \( (u_{m}) \subset \cB_{M} \). We show that this sequence satisfies the conditions of the Proposition.
	
	Since \( \cB \subset H^{1} \), we have \( \int k^{2} (\tilde{u}_{m}(k))^{2} \, dk \leq M < \infty \), and it follows that we must have
	\begin{equation*}
		\lim_{r \to \infty} \sup_{m} \int_{\abs{k}>r} \abs{\tilde{u}_{m}(k)}^{2} \, dk = 0,
	\end{equation*}
	which is the same as the second condition in the Proposition.
	
	For the first condition, we can proceed by contradiction: suppose that
	\begin{equation}
		\lim_{r \to \infty} \sup_{u \in \cB_{M}} \int_{\abs{x}>r} \abs{u(x)}^{2} \, dx = \alpha > 0.
	\end{equation}
	Then there are sequences \( (u_{m}) \subset \cB_{M} \), \( (R_{m}) \subset \reals_{>0} \) so that
	\begin{equation}
		\int_{\abs{x}>R_{m}} \abs{u_{m}(x)}^{2}  \, dx \geq \tfrac{1}{2}\alpha,
	\end{equation}
	and since by \autoref{thm:minxytightness} this cannot occur, \( \cB_{M} \) must satisfy the requirements of \autoref{thm:Pego-RK}.
\end{proof}

\subsection{Existence of a Minimiser}

Since \( E \) is bounded below on \( \cB \), we can define
\begin{equation}
	e_{0} = \inf_{v \in \cB_{1}} E[v].
\end{equation}

We shall prove

\begin{theorem}
\label{thm:z=1exists}
	There is \( u \in \cB_{1} \) with \( E[u] = e_{0} \).
\end{theorem}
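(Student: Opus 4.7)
The plan is to run the direct method of the calculus of variations in $\cB_1$, exactly mirroring the structure of \autoref{thm:z>1exists} but replacing the Hilbert-space ingredients by the Banach-space machinery developed in \S\ref{sec:Bspaces}. Take a minimising sequence $(u_m)\subset\cB_1$, i.e.\ $\norm{u_m}_2=1$ and $E[u_m]\to e_0$. Both $\norm{u_m'}_2^2$ and $C[u_m^2]$ are non-negative, and $e_0\geq 0$ because $E\geq 0$ on $\cB_1$, so each of them is bounded uniformly in $m$. Hence $\norm{u_m}_{H^1}$ is bounded (using $\norm{u_m}_2=1$), and $\norm{u_m}_B=C[u_m^2]^{1/4}$ is bounded; the $\cB$-norm of $u_m$ is therefore bounded.

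Next I would extract a weakly convergent subsequence. Since $\cB$ is reflexive, there exists $u\in\cB$ and a subsequence $(u_{m'})$ with $u_{m'}\tow u$ in $\cB$. By \autoref{thm:BsubsubL2}, $\cB\Subset L^2(\reals)$, so after passing to a further subsequence we have $u_{m'}\to u$ strongly in $L^2$, which gives $\norm{u}_2=1$, i.e.\ $u\in\cB_1$. Passing once more to a subsequence (c.f.\ \cite{lieb2001analysis}, Cor.\ 8.7), we may also assume $u_{m'}\to u$ pointwise almost everywhere.

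It remains to verify $E[u]\leq\liminf_{m'}E[u_{m'}]=e_0$, i.e.\ weak lower semicontinuity of $E$ along this subsequence. The kinetic term $\norm{u'}_2^2$ is WLSC in $H^1$, hence in $\cB$, by the standard argument (weak convergence in $H^1$ implies weak convergence of $u_{m'}'$ in $L^2$). For the potential term, the kernel
\[
	g(x,y)=\tfrac{1}{2}\bigl(\abs{x}+\abs{y}-\abs{x-y}\bigr)
\]
is non-negative by the triangle inequality, so $(x,y)\mapsto(u_{m'}(x))^2(u_{m'}(y))^2 g(x,y)$ is a non-negative sequence converging almost everywhere to $(u(x))^2(u(y))^2 g(x,y)$, and Fatou's lemma yields $\liminf_{m'}C[u_{m'}^2]\geq C[u^2]$, exactly as in \autoref{thm:z>1wlsc}. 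Adding the two contributions gives $E[u]\leq\liminf_{m'}E[u_{m'}]=e_0$. Combined with $u\in\cB_1$ and the definition of $e_0$, this forces $E[u]=e_0$.

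The main obstacle is the extraction of a strongly $L^2$-convergent subsequence, because the usual $\cX\Subset L^2$ is unavailable here: it is precisely the point of introducing $\cB$ that the weight $\abs{x}$ is replaced by the weaker control coming from $C[u^2]<\infty$. This obstacle is exactly what \autoref{thm:minxytightness} and \autoref{thm:BsubsubL2} were built to remove, by using the non-increasing-integrand form of $C_+$ from \S\ref{sec:formspoten} together with the Pego/Kolmogorov--Riesz criterion; once those are in hand, the remainder of the argument is the standard direct method.
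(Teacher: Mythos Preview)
Your proposal is correct and follows essentially the same route as the paper: bounded minimising sequence in $\cB$, weak compactness from reflexivity of $\cB$, strong $L^2$ convergence via \autoref{thm:BsubsubL2} to keep the constraint $\norm{u}_2=1$, and WLSC of $E$ via Fatou on the non-negative kernel (the paper packages this last step as \autoref{thm:E1WLSC}). Your extra remarks on how boundedness in $\cB$ is obtained and on the role of \autoref{thm:minxytightness} only make explicit what the paper leaves implicit.
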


\begin{lemma}
\label{thm:E1WLSC}
	\( E \) is weakly lower-semicontinuous on \( \cB \).
\end{lemma}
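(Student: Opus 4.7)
The plan is to adapt the argument of \autoref{thm:z>1wlsc}, replacing the role played there by the Rellich Criterion \autoref{thm:XHilbert} with the compact embedding \autoref{thm:BsubsubL2}. Suppose \( u_m \tow u \) in \( \cB \); I may assume that \( \liminf_m E[u_m] \) is finite (else there is nothing to prove) and pass to a subsequence along which \( E[u_m] \) converges to this \( \liminf \).

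For the kinetic part \( \norm{u'}_2^2 \): since the \( \cB \)-norm dominates the \( H^1 \)-norm, every bounded linear functional on \( H^1 \) restricts to a bounded linear functional on \( \cB \), so \( u_m \tow u \) weakly in \( H^1 \) as well. The standard fact that \( v \mapsto \norm{v'}_2^2 \) is weakly lower-semicontinuous on \( H^1 \) then gives \( \norm{u'}_2^2 \leq \liminf_m \norm{u_m'}_2^2 \).

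For the potential part \( C[u^2] \), \autoref{thm:BsubsubL2} yields a subsequence of \( (u_m) \) converging strongly in \( L^2(\reals) \), and then (extracting once more) converging pointwise almost everywhere on \( \reals \). By Fubini the products \( (x,y) \mapsto u_{m_k}(x)^2 u_{m_k}(y)^2 \) then converge a.e.\ on \( \reals^{2} \) to \( u(x)^{2} u(y)^{2} \), and since the kernel \( g(x,y) = \tfrac{1}{2}(\abs{x}+\abs{y}-\abs{x-y}) \) is nonnegative (as noted at the start of this section), Fatou's lemma applied to the double integral yields \( C[u^2] \leq \liminf_k C[u_{m_k}^2] \).

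Combining the two \( \liminf \) inequalities — legitimate because each term is nonnegative, so no \( \infty - \infty \) arises — gives \( E[u] \leq \liminf_m E[u_m] \), which is exactly the required WLSC. The only genuine subtlety is that pointwise almost-everywhere convergence does not come directly from weak convergence in \( \cB \); this is precisely what \autoref{thm:BsubsubL2} supplies, and once the strong \( L^{2} \) subsequence has been extracted the proof runs parallel to that of \autoref{thm:z>1wlsc}.
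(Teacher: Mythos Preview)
Your proof is correct and follows the same approach as the paper: treat the kinetic term via standard weak lower-semicontinuity on \(H^{1}\), and handle the nonnegative potential term by extracting a pointwise-a.e.\ convergent subsequence and applying Fatou. The paper's proof is literally ``same as \autoref{thm:z>1wlsc}, replacing \(g\) with \(G\)'', and in that earlier lemma the a.e.\ subsequence is obtained from weak \(H^{1}\) convergence alone (via local Rellich, the Lieb--Loss reference), not from the global compact embedding \autoref{thm:BsubsubL2}. Your route through \autoref{thm:BsubsubL2} works and is logically available at this point in the paper, but it is a heavier tool than needed: weak convergence in \(\cB\) already implies weak convergence in \(H^{1}\), which already yields an a.e.-convergent subsequence without appealing to the full \(\cB \Subset L^{2}\) result. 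Otherwise the arguments coincide.
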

\begin{proof}
	The proof is the same as in the \( z>1 \) case, \autoref{thm:z>1wlsc}, replacing $g$ with $G$.
\end{proof}

\begin{proof}[Proof of Theorem]
	By the definition of \( e_{0} \), there is a sequence \( u_{m} \subset \cB_{1} \) such that \( E[u_{m}] \to e_{0} \) as \( m \to \infty \). Clearly we can pick these \( u_{m} \) so that \( E[u_{m}] \) is uniformly bounded above in \( \cB \). Since \( \cB \) is reflexive, the Banach--Alaoglu theorem implies that there is a subsequence \( m' \subset m \) and a function \( u \in \cB \) such that \( u_{m'} \tow u \).
	
	The compactness theorem, \autoref{thm:BsubsubL2}, implies that \( \cB \Subset L^{2} \), and hence \( \norm{u}_{2} = 1 \). (Since \(L^{2}\) is a Hilbert space, this also gives us \( u_{m'} \to u \) strongly in \(L^{2}\).)
	
	By \autoref{thm:E1WLSC}, \( E \) is WLSC on \( \cB \). Hence
	\begin{equation}
		e_{0} = \lim_{m' \to \infty} E[u_{m'}] \geq E[u],
	\end{equation}
	but \( u \in \cB_{1} \), so \( E[u] \geq e_{0} \) by definition, and hence we must have \( E[u] = e_{0} \).
\end{proof}

\section{Symmetric Non-increasing: A (Strict) Rearrangement inequality for \texorpdfstring{\( C[f] \)}{C[f]}}

\begin{lemma}
	Let $g: \reals^{n} \to \reals $ be a spherically symmetric nondecreasing function (that is, if \( t \geq 0 \) and \( n \) is a unit vector, \( g(t n ) \) is a nondecreasing function of \(t\) and has the same value for any choice of \(n\)). Then, for any \( 0<a\leq \infty\), if \( f\geq 0 \) is rearrangeable (that is, measurable and with \( \mu\{ s: \abs{f}>s\} < \infty \) for \( s > 0 \)), we have
	\begin{equation}
		\int  f(x) g(x)\, dx \geq \int  f^{*}(x) g(x)\, dx,
	\end{equation}
	where \(f^{*}\) is the symmetric-decreasing rearrangement of \(f\).\footnote{See e.g. \cite{lieb2001analysis}, Ch.~3.} Further, if \( g \) is strictly increasing and \( f \) is not almost everywhere equal to \( f^{*} \), then the inequality is strict.
\end{lemma}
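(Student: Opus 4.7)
The plan is to reduce to the bathtub principle via a layer-cake decomposition. Since $\int f = \int f^{*}$, subtracting $g(0)$ from $g$ changes both sides of the claimed inequality by the same amount, so we may assume $g \geq 0$ (still spherically symmetric and nondecreasing). By the layer-cake formula together with Tonelli (all integrands nonnegative),
$$\int f(x) g(x) \, dx = \int_{0}^{\infty} \left( \int_{\{f > s\}} g(x) \, dx \right) ds,$$
and likewise for $f^{*}$.

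For each $s > 0$, set $V_{s} := |\{f > s\}| = |\{f^{*} > s\}|$, which is finite because $f$ is rearrangeable, and let $B_{r_{s}}$ be the centered ball of measure $V_{s}$; by construction $\{f^{*} > s\} = B_{r_{s}}$. The bathtub principle (e.g. Lieb--Loss, Thm.~1.14) says that among measurable sets $E$ of prescribed measure $V_{s}$, the integral $\int_{E} g$ is minimised on the sublevel sets of $g$; since $g$ is spherically symmetric nondecreasing, the sublevel set of measure $V_{s}$ is precisely $B_{r_{s}}$. Hence
$$\int_{\{f > s\}} g \;\geq\; \int_{B_{r_{s}}} g \;=\; \int_{\{f^{*} > s\}} g,$$
and integrating over $s$ yields the inequality.

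For strictness, assume $g$ is strictly increasing in $|x|$ and $f \neq f^{*}$ on a set of positive measure. Then there exists $s_{0} > 0$ with $\bigl|\{f > s_{0}\} \,\triangle\, B_{r_{s_{0}}}\bigr| > 0$; strict monotonicity makes the bathtub minimiser unique up to null sets, so at this $s_{0}$ the above inequality is strict. The function $\varphi(s) := \int_{\{f > s\}} g - \int_{\{f^{*} > s\}} g$ is nonnegative and right-continuous (monotone convergence as $\{f > s\} \searrow \{f > s_{0}\}$), so $\varphi > 0$ on an interval $(s_{0}, s_{0} + \eta)$, and integration gives strict inequality. The main obstacle is this last step: promoting pointwise strict inequality at one level to a strict inequality of integrals, which is why the uniqueness clause of the bathtub principle together with right-continuity of the level-set integrals is essential.
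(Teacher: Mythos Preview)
Your argument is correct and essentially the same as the paper's: both use the layer-cake reduction to characteristic functions and then apply the bathtub principle, which the paper proves inline (comparing $A\setminus A^{*}$ with $A^{*}\setminus A$ via the trivial bounds $\mu(B)\inf_{B}g\le\int_{B}g\le\mu(B)\sup_{B}g$) rather than citing it by name. One minor slip: as $s\downarrow s_{0}$ the level sets $\{f>s\}$ \emph{increase} to $\{f>s_{0}\}$, not decrease, but monotone convergence still yields the right-continuity you need.
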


One can compare this to the standard Hardy--Littlewood inequality,\footnote{\cite{HLP:1952}, Theorem 378 (p.278). The reason for the attribution to only Hardy and Littlewood is unclear, given that no trace of it is found prior to the proof given there.} in which both functions are rearranged, or the Riesz-type inequality discussed by Choquard and Stubbe \cite{Choquard:2007fk} in their proof, but this is somehow more intuitive and basic.

\begin{proof}
	We may assume that \(g\) is bounded below: if not, we consider the truncations \( g_{n}(x) = \max{\{ g(x),-n \}} \), and then apply the monotone convergence theorem.
	
	For a \(g\) that is bounded below, it suffices to check for \( f \) the characteristic function of a measurable set, \( f=\chi_{A} \). Therefore we need to prove that
	\begin{equation}
		\int_{A} g \geq \int_{A^{*}} g,
	\end{equation}
	where \(A^{*}\), the spherically symmetric rearrangement of \(A\), is the ball of volume \( \mu(A) \) centred at \(0\), with radius $m \geq 0$. It is apparent that the points of \( A \cap A^{*} \) appear on both sides of the inequality, so they are irrelevant. Therefore we have to show
	\begin{equation*}
		\int_{A \setminus A^{*}} g \geq \int_{A^{*} \setminus A} g.
	\end{equation*}
	We have the well-known bounds
	\begin{equation}
	\label{eq:trivialbounds}
		\mu(B) \inf_{B}{g} \leq \int_{B} g \leq \mu(B) \sup_{B}{g};
	\end{equation}
	since \( \mu(A) = \mu(A^{*}) \) and \( g \) is symmetric nondecreasing, we can apply the left inequality to \( A \setminus A^{*} \), the right to \( A^{*} \setminus A \) to obtain
	\begin{align*}
		\int_{A \setminus A^{*}} g &\geq \mu(A \setminus A^{*}) g(m) \\
		&\geq \int_{A^{*} \setminus A} g,
	\end{align*}
	as required, and we then extend using linearity and monotone convergence.
	
	To prove the second part, notice that equality in \eqref{eq:trivialbounds} requires that 
	\begin{equation*}
		\mu( \{x: \essinf_{B} g \neq g \neq \esssup_{B} g \} ) = 0,
	\end{equation*}
	so either \( \mu(B)=0 \) or \( g \) is essentially constant on \( B \), neither of which can happen if \( g \) is strictly increasing and  \( f \) is not a.e. equal to \( f^{*} \) (a non-negligible collection of its level sets will have \( A \setminus A^{*} \) non-null\footnote{See Lieb and Loss, \cite{lieb2001analysis} Theorem 3.4 (p.82), for the details of the argument.}).
\end{proof}

We now need an alternative to this to deal with the double integral in \(C\).

\begin{proposition}
	Let \(f,g \in L^{1}(\reals) \) be locally integrable, and suppose \(f,g \geq 0\). Let \(h(x,y)\) be even in \(x\), even in \(y\) and, for all \(y\), a nondecreasing function of \(x\) for \(x>0\), and similarly in \( y \) for all \( x \). Then
	\begin{equation}
		\iint f(x) g(y) h(x,y) \, dx \, dy \geq \iint f^{*}(x) g^{*}(y) h(x,y) \, dx \, dy.
	\end{equation}
	If ``nondecreasing'' is replaced by ``strictly increasing'', \(f=g\), and \(f\) is not almost everywhere equal to \(f^{*}\), the inequality is strict.
\end{proposition}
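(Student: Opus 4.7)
My plan is to reduce the two-variable statement to two successive applications of the preceding one-variable lemma, treating one coordinate at a time while regarding the other as a parameter. Since $f,g \geq 0$ and $h \geq h(0,0) \geq -\infty$ everywhere (in fact, we can assume $h$ is bounded below on compact sets by truncating and passing to a monotone limit as in the lemma), Tonelli's theorem will let me freely exchange the order of integration.

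First I fix $y \in \reals$ and apply the one-dimensional version of the preceding lemma to the function $x \mapsto h(x,y)$, which is even and nondecreasing on $[0,\infty)$ by hypothesis, hence spherically symmetric nondecreasing on $\reals$. With $f$ playing the role of the rearrangeable function, this yields
\begin{equation*}
	\int_{\reals} f(x) h(x,y) \, dx \geq \int_{\reals} f^{*}(x) h(x,y) \, dx \quad \text{for every } y.
\end{equation*}
Multiplying by $g(y) \geq 0$ and integrating in $y$ (Tonelli) gives
\begin{equation*}
	\iint f(x)g(y)h(x,y) \, dx \, dy \geq \iint f^{*}(x)g(y)h(x,y) \, dx \, dy.
\end{equation*}
Now I repeat the argument in the other variable: for each fixed $x$, the function $y \mapsto h(x,y)$ is spherically symmetric nondecreasing, so applying the preceding lemma to $g$ and multiplying by $f^{*}(x) \geq 0$ (then integrating in $x$) yields
\begin{equation*}
	\iint f^{*}(x)g(y)h(x,y) \, dx \, dy \geq \iint f^{*}(x)g^{*}(y)h(x,y) \, dx \, dy.
\end{equation*}
Chaining the two inequalities gives the required bound.

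For the strict part, suppose $h$ is strictly increasing in each variable on $(0,\infty)$, $f=g$, and $f$ is not a.e.\ equal to $f^{*}$. Then for \emph{every} $y$, the preceding lemma's strict case applies to $h(\cdot,y)$ and $f$, giving a strict pointwise inequality
\begin{equation*}
	\int_{\reals} f(x) h(x,y) \, dx > \int_{\reals} f^{*}(x) h(x,y) \, dx.
\end{equation*}
Since $g=f$ is not a.e.\ zero (otherwise $f$ would equal $f^{*}$ a.e.), integrating against $g(y) \, dy$ preserves strictness, and the second inequality (which is only $\geq$) then gives an overall strict inequality.

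The main obstacle is essentially bookkeeping: ensuring the hypotheses of the preceding lemma really are satisfied pointwise in the frozen variable (even-and-nondecreasing on $[0,\infty)$ is exactly the one-dimensional notion of spherically symmetric nondecreasing), and that Tonelli applies so the iterated inequalities may be combined. The truncation-and-monotone-convergence device used in the proof of the preceding lemma handles any issue of $h$ failing to be bounded below, so no new ideas are needed for the double integral.
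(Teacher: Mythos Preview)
Your argument is correct, and it takes a genuinely different route from the paper's.

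The paper proceeds by reducing to characteristic functions $f=\chi_A$, $g=\chi_B$, splitting the product $A\times B$ into four regions according to whether each coordinate lies in $A\cap A^*$ or $A\setminus A^*$ (and similarly for $B$), and bounding each piece separately using the monotonicity of $h$ along coordinate lines. Strictness is then read off from the ``corner'' region $(A\setminus A^*)\times(B\setminus B^*)$.

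Your approach instead freezes one variable and applies the one-dimensional lemma, then repeats in the other variable. This is cleaner here: it avoids redoing the layer-cake reduction and gets strictness in a single line from the pointwise strict inequality in $y$ integrated against the non-null weight $f(y)\,dy$. The paper's decomposition, on the other hand, makes the geometry of the level sets explicit and is what the author adapts for the \emph{next} proposition, where $h$ is only assumed symmetric ($h(x,y)=h(y,x)$) rather than even in each variable separately --- there your iteration would break down, since $h(\cdot,y)$ need not be even, so the one-variable lemma does not apply slice by slice.

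One small point worth tightening in your strict argument: you implicitly assume the slice integrals $\int f(x)h(x,y)\,dx$ and $\int f^*(x)h(x,y)\,dx$ are finite for a.e.\ $y$ in the support of $f$, so that their difference is a genuine positive function to integrate. This is harmless after the truncation you already invoke (replace $h$ by $\max\{h,-n\}$, use $f\in L^1$, then pass to the limit), but it is worth saying.
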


\begin{proof}
	Again, we shall prove the proposition first for characteristic functions of measurable sets; the rest of the argument follows in the usual way. Let \(f=\chi_{A}\), \( g=\chi_{B} \). This time we have to split \( A \times B \) into \( 4 \) regions:
	\begin{align}
		(A \cap A^{*}) \times (B \cap B^{*}), \quad (A \setminus A^{*}) \times (B \cap B^{*}), \quad (A \cap A^{*}) \times (B \setminus B^{*}), \quad (A \setminus A^{*}) \times (B \setminus B^{*}).
	\end{align}
	The integral over the first of these is unchanged on replacing \(A\) and \(B\) by their respective rearrangements \(A^{*}\) and \(B^{*}\). Importantly, since the middle two sets are only translated parallel to one axis in the transition \( A \to A^{*} \), \( B \to B^{*} \), and \( h(x,y) \) is nondecreasing on lines of constant \(x\) and constant \(y\),  the integrals over the middle two cannot increase. Finally, both of the coordinates of the last set must decrease in absolute value, and it follows by the same argument as in the proof of the previous theorem that
	\begin{align*}
		\int_{A \setminus A^{*}} \int_{B \setminus B^{*}} h(x,y) \, dx \, dy &\geqslant \mu(A \setminus A^{*})\mu(B \setminus B^{*}) h(\mu(A)/2,\mu(B)/2) \\
		&\geqslant \int_{A^{*} \setminus A} \int_{B^{*} \setminus B} h(x,y) \, dx \, dy,
	\end{align*}
	and hence the inequality follows by the same reasoning as before.
	
	The proof of the strictness is also similar, relying on the strictness of the last inequality when \( h(x,y) \) is strictly increasing in both arguments and \( \mu(A \setminus A^{*}) > 0 \).
\end{proof}

\begin{corollary}[Extension to \(n\) functions]
	Suppose \( (f_{i})_{i=1}^{n} \subseteq L^{1}(\reals) \) satisfy \(f_{i}\geq 0\) and \(h(x_{1}, \dotsc , x_{n})\) is real-valued function, even in each argument, which is nondecreasing as a function of \(x_{i}\)for all the other entries fixed and \(x_{i}>0\). Then
	\begin{equation}
		\idotsint \left( \prod_{i=1}^{n} f(x_{i}) \right) h(x_{1},\dotsc,x_{n}) \, dx_{1} \dotsm \, dx_{n} \geq \idotsint \left( \prod_{i=1}^{n} f^{*}(x_{i}) \right) h(x_{1},\dotsc,x_{n}) \, dx_{1} \dotsm \, dx_{n}.
	\end{equation}
	If ``nondecreasing'' is replaced by ``strictly increasing'', and \(f_{i}=f\), if \(f\) is not almost everywhere equal to \(f^{*}\) the inequality is strict.
\end{corollary}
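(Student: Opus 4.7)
The plan is to reduce the $n$-function corollary to iterated applications of the single-integral first Lemma (rather than going through the $n=2$ proposition), rearranging one variable at a time.

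First I would fix an index $i$ and freeze the remaining variables $(x_j)_{j\neq i}$, treating the restriction $h_i(x_i) := h(x_1,\ldots,x_n)$ as a function of $x_i$ alone. By the hypotheses on $h$, this $h_i$ is even and nondecreasing in $|x_i|$, i.e.\ a spherically symmetric nondecreasing function on $\reals$ in the sense required by the first Lemma. Apply that Lemma with $f = f_i$ (the condition $f_i \in L^1(\reals)$ with $f_i \geq 0$ gives rearrangeability automatically since $\mu\{f_i > s\} \leq \|f_i\|_1/s < \infty$) and $g = h_i$ to obtain
\begin{equation*}
\int_{\reals} f_i(x_i)\,h(x_1,\ldots,x_n)\,dx_i \;\geq\; \int_{\reals} f_i^{*}(x_i)\,h(x_1,\ldots,x_n)\,dx_i
\end{equation*}
pointwise in the frozen variables. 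Multiplying by the non-negative factor $\prod_{j\neq i}f_j(x_j)$ and integrating over $(x_j)_{j\neq i}$ using Tonelli's theorem (legitimate because every integrand in sight is non-negative, so both sides are defined in $[0,\infty]$), one obtains the same multiple-integral inequality with $f_i$ replaced by $f_i^{*}$.

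Next I would iterate this substitution for $i = 1, 2, \ldots, n$. Replacing $f_i$ by $f_i^{*}$ in one slot does not affect the hypotheses needed to apply the Lemma in the remaining slots (the structural assumption on $h$ is symmetric across its arguments, and a rearrangement is still a non-negative $L^1$ function), so after $n$ steps every $f_i$ has been replaced by $f_i^{*}$, yielding the desired inequality.

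For the strictness claim, assume all $f_i = f$ and $f$ is not a.e.\ equal to $f^{*}$. Under the strengthened hypothesis that $h$ is strictly increasing in each argument for $x_i>0$, the first step ($i=1$) produces a strict inner inequality for every choice of $(x_2,\ldots,x_n)$, by the strict form of the first Lemma. Integrating this strict pointwise inequality against the non-negative, non-trivial density $\prod_{j\geq 2} f(x_j)$ preserves strictness in the outer integral; the remaining $n-1$ (weak) substitutions can only further decrease the right-hand side, so the overall strict inequality survives. The principal obstacle is really bookkeeping: one must verify that the one-variable Lemma applies with exactly the right hypotheses at each step (handled by the evenness plus one-sided monotonicity of $h$ in each slot), and that strictness—which is only guaranteed in a pointwise sense at a single step—integrates into a strict global inequality, which it does because $f \not\equiv 0$ makes the weighting density non-trivial.
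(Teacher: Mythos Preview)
Your argument is correct, but it follows a different line from the paper's. The paper does not iterate the one-variable Lemma; instead it repeats the layer-cake reduction to characteristic functions and then decomposes the product set \(A_{1}\times\dotsb\times A_{n}\) into the \(2^{n}\) pieces \(\prod_{i}(A_{i}\cap A_{i}^{*})\) or \((A_{i}\setminus A_{i}^{*})\), bounding each piece using the coordinatewise monotonicity of \(h\) exactly as in the two-function Proposition. Your route---freeze all but one variable, apply the single-integral Lemma to that slot, then Tonelli and iterate---is cleaner and avoids the combinatorial bookkeeping of \(2^{n}\) regions; in exchange it leans on the strict form of the one-variable Lemma at a single step and then only the weak inequality thereafter, which is perfectly legitimate. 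The paper's direct decomposition has the minor advantage that strictness is read off uniformly from all the ``mixed'' regions at once, whereas in your iteration you must (as you correctly note) check that the strict pointwise inner inequality, integrated against the non-trivial weight \(\prod_{j\geq 2}f(x_{j})\), survives as a strict outer inequality; since that weight is positive on a set of positive measure this is fine.
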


The proof of this is obviously carried out analogously to the previous theorem, but with \( 2^{n} \) regions instead of \( 4 \).

However, the result we actually need is:
\begin{proposition}
	Suppose \(f\geq 0\) and \(h(x,y)\) is a symmetric real-valued function which for all \(y\), is a nondecreasing function of \(x\) for \(x>0\), and a nondecreasing function of \( -x \) for \(x<0\) (i.e. \(h(-x,y)\) is nondecreasing for \( x >0 \)). Then
	\begin{equation}
		\iint f(x) f(y) h(x,y) \, dx \, dy \geq \iint f^{*}(x) f^{*}(y) h(x,y) \, dx \, dy,
	\end{equation}
	If \(h\) is strictly increasing on almost all diagonals \(y=kx\) for \( k>0 \), and \(f\) is not almost everywhere equal to \(f^{*}\), then the inequality is strict.
\end{proposition}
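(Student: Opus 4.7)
The plan is to mimic the decomposition strategy used in the preceding two propositions, adapted to the weaker monotonicity hypothesis. First, by the layer-cake representation $f = \int_0^\infty \chi_{\{f > t\}}\,dt$ and the identity $\{f^* > t\} = \{f > t\}^*$, it suffices to establish the indicator version
\[
J(A) := \iint \chi_A(x)\,\chi_A(y)\,h(x,y)\,dx\,dy \;\geq\; J(A^*)
\]
for every measurable $A \subset \reals$ with $\mu(A) = 2R < \infty$, so that $A^* = (-R,R)$.

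For such an $A$, write $A = A_+ \sqcup A_-$ with $A_\pm = A \cap \reals^\pm$ of measures $\alpha$ and $\beta$ respectively, so that $\alpha + \beta = 2R$. I split $J(A)$ by the signs of $x$ and $y$ into the four quadrant integrals. On each sign-quadrant the integrand is a function of $(|x|,|y|)$ which, by hypothesis together with the $(x,y)$-symmetry, is nondecreasing in each argument. I then apply a one-sided analogue of the first rearrangement lemma quadrant-by-quadrant, showing that replacing $A_+$ by $(0,\alpha)$ and $A_-$ by $(-\beta,0)$ --- i.e.\ pushing each half of the mass toward the origin --- cannot increase any of the four pieces. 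This yields
\[
J(A) \;\geq\; J\bigl( (-\beta,0) \cup (0,\alpha) \bigr),
\]
with strict inequality whenever either $A_+$ or $A_-$ differs non-negligibly from the interval adjacent to the origin, because $h$ is strictly increasing on the half-diagonals $y = kx$, $k>0$.

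It remains to execute the ``balancing'' step $J\bigl((-\beta,0) \cup (0,\alpha)\bigr) \geq J(A^*)$. Writing the left-hand side as a function $F(\alpha)$ on $[0,2R]$ (with $\beta = 2R-\alpha$), I differentiate $F$ and exploit the full $(x,y)$-symmetry of $h$ to show that the mixed-sign-quadrant contributions cancel to first order at $\alpha = R$, whence $F'(R) = 0$; the monotonicity of $h$ on each quadrant then yields $F''(\alpha) \geq 0$, with strict inequality when $h$ is strictly monotone. Hence $F$ attains a (strict, when applicable) minimum at $\alpha = R$, which is what is required. The strict inequality of the proposition then follows by tracking strictness through both reduction steps: whenever $f \neq f^*$ a.e., either the one-sided rearrangement or the two-sided balancing must be non-trivial.

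The main obstacle is the balancing step. In the previous proposition the separate evenness of $h$ in each variable makes the four-quadrant decomposition entirely symmetric and any balancing is automatic, whereas here the two mixed-sign quadrants contribute genuinely asymmetric terms, and one must combine the $(x,y)$-symmetry of $h$ with the symmetry of the integrand $f(x)f(y)$ to cancel the asymmetry and reduce to a convex one-variable problem in the mass-split parameter $\alpha$.
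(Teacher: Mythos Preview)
Your route is genuinely different from the paper's: the paper goes straight from \(A\) to \(A^{*}\) by the same \((A\cap A^{*})\)/\((A\setminus A^{*})\) decomposition as in the previous proposition, whereas you pass through the intermediate set \((-\beta,0)\cup(0,\alpha)\) and then balance the two halves.

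The balancing step, however, does not go through. You assert that the \((x,y)\)-symmetry of \(h\) forces \(F'(R)=0\), but differentiating gives
\[
F'(R)=2\int_{-R}^{R}\bigl(h(R,y)-h(-R,y)\bigr)\,dy,
\]
and nothing in the hypotheses links \(h(R,\cdot)\) to \(h(-R,\cdot)\). A concrete obstruction: take \(h(x,y)=|x|+|y|+\phi(x)\phi(y)\) with \(\phi\) smooth and nondecreasing, \(\phi\equiv 0\) on \((-\infty,0]\) and \(\phi\equiv 1\) on \([1,\infty)\). This \(h\) is symmetric in \((x,y)\) and has the required one-sided monotonicity in each variable, yet \(h(R,y)-h(-R,y)=\phi(y)\geq 0\) with strict inequality on a set of positive measure, so \(F'(R)>0\) and \(F\) is \emph{not} minimised at \(\alpha=R\). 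In fact for \(R=2\) one computes \(J\bigl((-2.2,1.8)\bigr)=34.01<34.25=J\bigl((-2,2)\bigr)\), so the inequality in the proposition is actually violated for this \(h\); your one-sided rearrangement step is fine, it is the balancing that fails.

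This shows the proposition as stated is missing a hypothesis --- the natural one being the point symmetry \(h(-x,-y)=h(x,y)\), which the paper's kernel \(G\) does satisfy, and under which \(F(\alpha)=F(2R-\alpha)\) so that \(F'(R)=0\) is immediate (you would still owe an argument for the minimum, since convexity of \(F\) is not obvious from monotonicity of \(h\) alone). It is worth noting that the paper's own proof shares this lacuna: it simply declares the non-strict part ``identical to that of the previous proposition'', but that earlier argument compared \(\int_{A\setminus A^{*}}h(x,y)\,dx\) with \(\int_{A^{*}\setminus A}h(x,y)\,dx\) by using separate evenness of \(h\) in \(x\), which is no longer assumed here.
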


\begin{proof}
	The first part of the proof is identical to that of the previous proposition. Now, suppose the assumptions of the second part are satisfied. Once again we consider \( f=\chi_{A} \). Set \( \mu(A)=2m \). Since \(f\) is not equivalent to \(f^{*}\), there is a measurable subset \( B \subseteq A \setminus A^{*} \) and \( \varepsilon>0 \) so that \( \mu(B)>0 \) and \( B \) is outside the interval \( [-m-\varepsilon,m+\varepsilon] \). We only need to consider \( x,y>0 \), for \( x,y<0 \) the argument is near-identical, and one of these will always contain a set of positive measure. (The case \(xy<0\) is not required, since we know that these terms cannot increase by the previous results.) There are now three cases: \( x>y \), \( x=y \), and \( x<y \). For \( x=y \), \( h(x,x)>h(m+\varepsilon,m+\varepsilon)>h(m,m) \), and there is no more to say. Suppose now \( x>y \) (the other case shall follow using the symmetry of \(h\)). Then for each \( (x,y) \in B \times B \), there is \( k>1 \) so that \( x>ky>m+\varepsilon \), so
	\begin{equation}
	h(x,y) \geq h(ky,y) > h\left(k(m+\varepsilon),m+\varepsilon\right) \geq h(m+\varepsilon,m+\varepsilon) > h(m,m).
	\end{equation}
	It follows that
	\begin{equation*}
		\inf_{x,y\in B} h(x,y) \geq h(m+\varepsilon,m+\varepsilon) > h(m,m),
	\end{equation*}
	so for this \( B \) the inequality is strict. The same argument as before shows that for any \( f \) not almost-everywhere equal to \( f^{*} \) we have enough such sets to make a strict inequality for the whole function.
\end{proof}

Now, if \( z \leq 1 \), our strict rearrangement inequality above applies to the potential energy term \( B \), so we have
\begin{equation*}
	B[u^{2}] > B[(u^{*})^{2}]
\end{equation*}
for any \(u\) that is not symmetric nonincreasing, and combining this with the P\'olya--Szeg\H{o} inequality for the kinetic energy,\footnote{originally proven in \cite{10.2307/2371912}} we have
\begin{equation*}
	E[u] > E[u^{*}]
\end{equation*}

\begin{corollary}
	Any minimiser of \(E\) in the appropriate space\footnote{I.e. \( \cX_{1} \) for $z> 1$, \( \cB_{1} \) for \(z=1\)} is symmetric nonincreasing.
\end{corollary}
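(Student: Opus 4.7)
The proof is essentially a direct combination of the two inequalities just established, so the plan is short. Let $u$ be a minimiser of $E$ in the appropriate space ($\cX_1$ when $z>1$, $\cB_1$ when $z=1$). Since $E[|u|]=E[u]$ (the kinetic term is invariant under $u \mapsto |u|$ for $H^1$ functions, and $u$ appears only as $u^2$ in the potential term), I may assume $u \geq 0$. Let $u^{*}$ be the symmetric-decreasing rearrangement of $u$.

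The next step is to verify that $u^{*}$ lies in the same space as $u$, so that it is an admissible competitor. Equimeasurability gives $\|u^{*}\|_2 = \|u\|_2 = 1$. The Pólya--Szegő inequality yields $\|(u^{*})'\|_2 \leq \|u'\|_2 < \infty$, so $u^{*} \in H^{1}$. For $z>1$, the first moment $\int |x|(u^{*})^{2}\,dx \leq \int |x| u^{2}\,dx$ by the standard rearrangement inequality applied to the symmetric increasing weight $|x|$, so $u^{*} \in \cX$. For $z=1$, the rearrangement inequality established in the previous proposition applied to the kernel $G(x,y)$ gives $C[(u^{*})^{2}] \leq C[u^{2}]<\infty$, so $u^{*}\in \cB$.

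Next I combine the two inequalities. In both cases the relevant potential kernel (the $g$ of the $z>1$ section, or $G$ in the $z=1$ section) is even and symmetric, nondecreasing in $|x|$ with $y$ fixed, and strictly increasing on almost every diagonal $y = kx$ with $k>0$: for $z>1$ the coefficients of $|x|$ and $|y|$ are positive, and for $z=1$ one has $G(x,y)=\min\{|x|,|y|\}$ on the open positive and negative quadrants. Hence the strict rearrangement proposition above applies, giving
\begin{equation*}
B[u^{2}] \geq B[(u^{*})^{2}],
\end{equation*}
with strict inequality unless $u = u^{*}$ almost everywhere. Adding this to Pólya--Szegő yields $E[u] \geq E[u^{*}]$, strictly unless $u=u^{*}$ a.e.

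Finally, since $u$ is a minimiser and $u^{*} \in \cX_1$ (resp.\ $\cB_1$), we must have $E[u]\leq E[u^{*}]$, so strict inequality is impossible. Therefore $u = u^{*}$ almost everywhere, which is precisely the statement that $u$ is symmetric nonincreasing. The only step that requires some care is the verification that $u^{*}$ remains in the right space, and that the kernels really satisfy the strict monotonicity hypothesis of the rearrangement proposition on a full-measure set of diagonals; both of these follow immediately from the explicit formulas already derived.
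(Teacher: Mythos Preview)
Your argument is correct and follows the same route as the paper: apply the strict rearrangement inequality just proved to the potential term, combine with P\'olya--Szeg\H{o} on the kinetic term, and conclude that a minimiser must coincide with its symmetric-decreasing rearrangement. You supply details the paper leaves implicit---the reduction to $u\geq 0$, the check that $u^{*}$ stays in $\cX_{1}$ or $\cB_{1}$, and the verification of the monotonicity hypotheses on the kernel---but the structure and key ingredients are identical.
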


\section{Existence for a More General Background}

We can use much the same technique as in the previous sections to prove the existence of a bound state for a more general potential:

\begin{theorem}
	Let \( \rho(x)<0 \), \( \int_{\reals} \abs{x}\abs{\rho(x)} \, dx < \infty \), and \( \int \rho = -z \leq -1 \). Then,
	\begin{enumerate}
	\item
	if \( z > 1 \), there is \( u \in \cX \) such that \( E[u] = \inf_{v \in \cX} E[v] \),
	\item
	if \(z=1\), there is \( u \in \cB_{1} \) such that \( E[u] = \inf_{v \in \cB_{1}} E[v] \),
	\end{enumerate}
	where \(E\) is given by \eqref{eq:energy}.
\end{theorem}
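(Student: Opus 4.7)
The plan is to reduce to the point-charge analysis of \S\S 3--4 by observing that, under the hypotheses on \(\rho\), the generated potential
\[
V_\rho(x) := \tfrac{1}{2}\int_{\reals}|x-y|\,|\rho(y)|\, dy
\]
differs from the point-charge potential \(\tfrac{z}{2}|x|\) by a uniformly bounded function. Indeed, from \(\bigl||x-y|-|x|\bigr|\leq |y|\) and the hypothesis \(C := \int_{\reals}|y|\,|\rho(y)|\, dy < \infty\), setting \(\theta := V_\rho - \tfrac{z}{2}|\cdot|\) yields \(\norm{\theta}_{L^\infty} \leq C/2\); by the same token \(K_\rho := -\tfrac{1}{2}\iint|x-y|\rho(x)\rho(y)\, dx\, dy\) is a finite constant (bounded in modulus by \(Cz\)). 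Expanding \(E[u]\) as in \S\ref{sec:probcons} and using \(\rho = -|\rho|\) to recognise \(-\iint|x-y|u^2(x)\rho(y)\,dx\,dy = 2\int u^2 V_\rho\),
\[
E[u] = \|u'\|_2^2 - \tfrac{1}{2}\iint|x-y|u^2(x)u^2(y)\, dx\, dy + z\int|x|u^2(x)\, dx + 2\int\theta(x)u^2(x)\, dx + K_\rho,
\]
whose first three terms are exactly the point-charge energy \(E_{\mathrm{pt}}[u]\) of \S 3 for nuclear charge \(z\). The perturbation \(2\int\theta u^2\) is bounded by \(C\) for any \(u\) with \(\|u\|_2 = 1\), and since \(\theta \in L^\infty\) the Cauchy--Schwarz estimate \(\|u_m^2 - u^2\|_1 \leq \|u_m - u\|_2(\|u_m\|_2 + \|u\|_2)\) shows that \(u\mapsto\int\theta u^2\) is strongly \(L^2\)-continuous on bounded sets.

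With this reduction every structural ingredient of Theorems \ref{thm:z>1exists} and \ref{thm:z=1exists} transfers at once. Coercivity is preserved on \(\cX_1\) (for \(z>1\), since \(E_{\mathrm{pt}}[u]\geq\|u'\|_2^2 + (z-1)\int|x|u^2\)) and on \(\cB_1\) (for \(z=1\), since \(E_{\mathrm{pt}}[u] = \|u'\|_2^2 + C[u^2]\) is coercive on \(\cB\)): a uniformly bounded additive perturbation cannot destroy a coercive lower bound. Weak lower-semicontinuity of \(E\) follows from that of \(E_{\mathrm{pt}}\) (Lemmas \ref{thm:z>1wlsc} and \ref{thm:E1WLSC}) combined with the strong-\(L^2\)-continuity of the perturbation, since weak convergence in \(\cX\) or \(\cB\) already delivers strong \(L^2\)-convergence via the compact embeddings \autoref{thm:XHilbert} and \autoref{thm:BsubsubL2}.

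With these three facts in hand, the extraction of a minimiser copies the earlier proofs verbatim: choose a minimising sequence \(u_m\) in \(\cX_1\) (resp.\ \(\cB_1\)); coercivity bounds it in \(\cX\) (resp.\ \(\cB\)); reflexivity (Hilbert for \(\cX\); the lemma after \autoref{thm:LBreflex} for \(\cB\)) extracts a weakly convergent subsequence \(u_{m'}\tow u\); the compact embedding into \(L^2\) guarantees \(\|u\|_2 = 1\); and weak lower-semicontinuity closes the argument via \(E[u]\leq \liminf E[u_{m'}] = \inf E \leq E[u]\).

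The one piece of bookkeeping requiring attention is in the \(z=1\) case: to invoke the tightness proposition \autoref{thm:minxytightness} one needs a uniform upper bound on \(C[u_m^2]\) along the minimising sequence, and this is exactly what the coercive lower bound \(E[u]\geq \|u'\|_2^2 + C[u^2] - C + K_\rho\) furnishes once \(E[u_m]\) is bounded above. Everything else is a mechanical transcription of \S\S 3--4 with \(\tfrac{z}{2}|x|\) replaced by \(V_\rho\) and the bounded constants absorbed as they appear.
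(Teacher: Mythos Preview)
Your proof is correct and takes a more elementary route than the paper's. Both arguments rest on the same two-sided bound \(|V_\rho(x)-\tfrac{z}{2}|x||\leq \tfrac12\int|y||\rho(y)|\,dy\), but you exploit it more fully: you write \(E=E_{\mathrm{pt}}+2\int\theta u^{2}+K_\rho\) with \(\theta\in L^\infty\), and then every property (coercivity, WLSC, compactness, reflexivity) is inherited from the point-charge case plus a perturbation that is bounded and strongly \(L^2\)-continuous. The paper instead splits the work: for coercivity it uses Jensen's inequality to obtain the one-sided bound \(V_\rho(x)\geq\tfrac{z}{2}|x-P|\) with a shifted centre \(P\), and for the \(z=1\) finiteness/equivalence it builds an auxiliary Hilbert space \(Y\) of neutral distributions with inner product \(\langle f,g\rangle=-\iint|x-y|fg\), proves positivity via Poisson's equation, and compares \(\langle u^2+\rho,u^2+\rho\rangle\) with \(\langle u^2-\delta,u^2-\delta\rangle=2C[u^2]\) by the triangle inequality in \(Y\). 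Your approach avoids this machinery entirely. What the paper's construction buys is a cleaner handling of one subtlety you pass over: for \(u\in\cB_1\setminus\cX_1\) your displayed expansion has the individual terms \(-\tfrac12\iint|x-y|u^2u^2\) and \(\int|x|u^2\) both equal to \(\infty\), so the identity \(E[u]=E_{\mathrm{pt}}[u]+2\int\theta u^2+K_\rho\) needs to be read as a \emph{definition} of \(E\) on that set (or justified by density of \(\cX_1\) in \(\cB_1\)); the paper's Hilbert-space completion makes this interpretation intrinsic. This is a minor point and easily patched, and otherwise your argument is both shorter and more transparent.
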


The proof is of the same form as that of the previous theorem, but we also use the following:

In both cases, we have as before that \( \int_{\reals} u^{2} =1 \), so
\begin{align*}
	B_{\rho}[u] &:= -\frac{1}{2}\int_{\reals} \int_{\reals} \abs{x-y}(u(x)^{2}+\rho(x))(u(y)^{2}+\rho(y)) \, dx \, dy \\
	&= \int_{\reals} \int_{\reals} \left( V_{\rho}(x)+V_{\rho}(y)-\frac{1}{2}\abs{x-y} \right) u(x)^{2} u(y)^{2} \, dx \, dy + \text{const.},
\end{align*}
as in the previous argument; the constant is finite since \( \int \rho V_{\rho} < 2 \left(\int \rho \right) \left(\int \abs{x} \rho(x) \, dx\right) < \infty \). Hence, we can take \( G(x,y) = V_{\rho}(x)+V_{\rho}(y)-\frac{1}{2}\abs{x-y} \), and proceed as before. Jensen's inequality also gives us
\begin{align*}
	z^{-1}V_{\rho}(x) &= \frac{1}{2}\int \abs{x-y} (-z^{-1}\rho(y))\, dy \\
	&\geq \frac{1}{2} \abs{ x\int (-z^{-1}\rho(y)) \, dy - \int y(-z^{-1}\rho(y)) \, dy } \\
	&= \frac{1}{2}\abs{x-z^{-1}\int y (-\rho(y)) \, dy}.
\end{align*}
Therefore, if we set \( P= -z^{-1}\int w \rho(w) \, dw \) and change variables to \( X= x-P \), \( Y = y-P \), we have
\begin{align*}
	\int_{\reals} \int_{\reals}& \left( V_{\rho}(x)+V_{\rho}(y)-\frac{1}{2}\abs{x-y} \right) u(x)^{2} u(y)^{2} \, dx \, dy \\
	&= \int_{\reals} \int_{\reals} \left( V_{\rho}(X+P)+V_{\rho}(Y+P)-\frac{1}{2}\abs{X-Y} \right) u(X-P)^{2} u(Y-P)^{2} \, dX \, dY \\
	&\geq \int_{\reals} \int_{\reals} \left( \frac{z}{2}(\abs{X}+\abs{Y})-\frac{1}{2}\abs{X-Y} \right) u(X-P)^{2} u(Y-P)^{2} \, dX \, dY,
\end{align*}
which is the same as the previous functional we considered. Hence this bound allows us to use the same proofs for coercivity, and WLSC as before. It remains to check that \( B_{\rho} \) is finite on the appropriate space in each case.

In both cases, we have the useful bound
\begin{equation}
\label{eq:VrhoUpperBound}
	V_{\rho}(x) \leq \frac{1}{2} \int_{\reals} (\abs{x}+\abs{y}) (-\rho(y)) \, dy = \frac{z}{2} \abs{x} + \frac{1}{2}\int_{\reals} \abs{y} \abs{\rho(y)} \, dy,
\end{equation}
so in fact
\begin{equation}
	\abs{V_{\rho}(x)-\frac{z}{2} x} \leq \frac{1}{2}\int_{\reals} \abs{y} \abs{\rho(y)} \, dy.
\end{equation}
In the \(z>1\) case, we can use this to show the finiteness of the \( \int_{\reals} u^{2} V_{\rho} \) terms for \( u \in \cX \) in the \(z>1\); the other is finite as in the \(\rho=-z\delta \) case.

On the other hand, in the \(z=1\) case, we exploit the inner product structure provided by \(B\): if we set
\begin{equation}
	\langle f,g \rangle = \int_{\reals} \int_{\reals} -\abs{x-y} f(x) g(y) \, dx \, dy,
\end{equation}
then we have

\begin{proposition}
	Let \( K = \{ f \in C_{c}^{\infty}(\reals) : \int_{\reals} f = 0 \} \). Then \( \langle \cdot , \cdot \rangle \) is an inner product on \(K\).
\end{proposition}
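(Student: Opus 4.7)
The plan is to verify the three defining properties of an inner product in turn. Bilinearity is immediate from linearity of the integral, and symmetry follows from $\abs{x-y}=\abs{y-x}$; so the substance of the proposition is positive-definiteness.

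The key observation is that $-\tfrac{1}{2}\abs{x-y}$ is (a choice of) Green's function for $-\partial_x^2$ on $\reals$, since $\partial_x^2 \abs{x} = 2\delta$ distributionally. I would therefore introduce
\[
V_f(x) := -\tfrac{1}{2}\int_{\reals} \abs{x-y}\, f(y)\, dy,
\]
so that for $f \in C_c^\infty(\reals)$ one has $V_f \in C^\infty(\reals)$ with $-V_f'' = f$ in the classical sense; the form then rewrites as $\langle f, g\rangle = 2\int V_f(x)\, g(x)\, dx$.

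The mean-zero condition enters through the asymptotics of $V_f'$. A direct computation gives $V_f'(x) = -\tfrac{1}{2}\int \sgn(x-y)\, f(y)\, dy$, so for $x$ to the right of the support of $f$ this equals $-\tfrac{1}{2}\int f = 0$, and symmetrically for $x$ to the left. Hence $V_f'$ is compactly supported whenever $f \in K$, while $V_f$ itself merely approaches the (generally nonzero) constants $\pm\tfrac{1}{2}\int y f(y)\, dy$ at $\pm\infty$.

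With this in hand, integration by parts gives
\[
\langle f,g\rangle = -2\int V_f V_g''\, dx = -2\bigl[V_f V_g'\bigr]_{-\infty}^{\infty} + 2\int V_f' V_g'\, dx = 2\int V_f' V_g'\, dx,
\]
where the boundary term vanishes precisely because $V_g'$ has compact support. In particular $\langle f,f\rangle = 2\norm{V_f'}_2^2 \geq 0$, and equality forces $V_f'\equiv 0$, hence $V_f$ constant, hence $f = -V_f'' = 0$. The main thing to be careful about is the vanishing of the boundary terms in this integration by parts: that is exactly what the mean-zero constraint defining $K$ (rather than all of $C_c^\infty$) buys us, and without it the kernel $-\abs{x-y}$ is too singular in the zero-mode direction to define an inner product.
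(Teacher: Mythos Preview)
Your proof is correct and follows essentially the same route as the paper: introduce the potential \(V_f\) solving \(-V_f''=f\), use the mean-zero condition to see that \(V_f'\) is compactly supported, and integrate by parts to obtain \(\langle f,f\rangle = 2\norm{V_f'}_2^2\). The paper carries out the integration by parts on a finite interval \([-a,b]\) containing the support and works only with the quadratic form \(\langle f,f\rangle\), whereas you pass to the limit directly and handle the bilinear version \(\langle f,g\rangle\); these are cosmetic differences.
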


This appears rather surprising, since the kernel is negative; however, it is also proportional to the inverse Laplacian. Intuitively, by Plancherel's theorem
\begin{equation*}
	\langle f,f \rangle = \int_{\reals} k^{-2} \abs{\tilde{f}(k)}^{2} \, dk,
\end{equation*}
but the question of integrability at zero is a substantial difficulty. We instead appeal to the solution to Poisson's equation using the inverse Laplacian.

\begin{proof}
	Let \( f \in K \). Then
	\begin{equation*}
		u(x) = -\frac{1}{2}\int_{\reals}\abs{x-y} f(y) \, dy
	\end{equation*}
	is well-defined, convergent, and solves Poisson's equation \( -u'' = f \). Multiplying both sides of this by \(2u\) and integrating, we have
	\begin{equation*}
		-\int_{-a}^{b} 2uu'' = -\int_{-a}^{b}\int_{\reals}\abs{x-y} f(x) f(y) \, dy \, dx = \langle f,f \rangle
	\end{equation*}
	if \(a\) and \(b\) are large enough, since \(f\) has compact support. Integrating by parts, the left-hand side becomes
	\begin{equation*}
	-\int_{-a}^{b} 2uu'' = -2u(b)u'(b)+2u(-a)u'(-a) + 2\int_{-a}^{b} u'^{2}.
	\end{equation*}
	The second term is clearly positive, and we need to verify that the boundary terms do not contribute. We know
	\begin{equation*}
		2u'(x) = - \int_{y<x} f(y) \, dy + \int_{y>x} f(y) \, dy,
	\end{equation*}
	and both of these are zero for sufficiently large (or sufficiently negative) \(x\) since \( \int_{\reals} f = 0 \) and \(f\) has compact support. It follows that \( \langle f,f \rangle = 2\int_{-a}^{b} u'^{2} \) is nonnegative, and it is also clear that it is zero if and only if \(f \equiv 0\), as required.
\end{proof}

We now complete \(K\) to the space
\begin{equation}
	Y = \left\{ f \in L^{1}(\reals) : \int_{\reals} f = 0, \langle f , f \rangle < \infty \right\}.
\end{equation}
using the norm \( (\norm{f}_{1}^{2}+\langle f,f \rangle)^{1/2} \); this is a Hilbert space, and \( \langle \cdot , \cdot \rangle \) remains an inner product on \( Y \), by a standard result in metric spaces.\footnote{See, e.g., \cite{Korner:2004fk}, p.~340, Lemma 14.11.}

Hence, it satisfies the triangle inequality, in particular, if \( \int_{\reals} \abs{x}\abs{\rho(x)} \, dx < \infty \) and \( \int \rho = -1 \), we have
\begin{equation}
	\abs{\langle u^{2} +\rho , u^{2}+\rho \rangle^{1/2} - \langle u^{2}-\delta , u^{2}-\delta \rangle^{1/2}} \leq \langle \delta + \rho, \delta+\rho \rangle^{1/2};
\end{equation}
the latter may be calculated to be the square root of \( \langle \rho,\rho \rangle + 2\int \abs{x} \rho(x) \, dx  \), which we know is finite by \eqref{eq:VrhoUpperBound} and the conditions in the theorem. Hence the Coulomb term \( \langle u^{2} +\rho , u^{2}+\rho \rangle \) is finite and coercive precisely when \( C[u^{2}] \) is; this gives us the theorem in the \(z=1\) case.

\section{Conclusion}

We have shown the existence of a stationary solution to the Maxwell--Schr\"odinger equations in one dimension, for a general fixed background charge distribution satisfying the natural condition \( \int_{\reals} \abs{x}\abs{\rho(x)} \, dx < \infty \). The case when there is more fixed than moveable charge is simple, and can be handled using only conditions similar to that mentioned in the previous sentence, but the \(z=1\) case required a more subtle argument, using the quartic Banach space \( \cB \) discussed in \S~\ref{sec:Bspaces}. The general case has been shown to follow simply from the case of the singular case \(\rho=\delta\), so the fundamental solution of the Coulomb equation continues to be useful even in this nonlinear problem.

There remains the question of whether the condition \( \int_{\reals} \abs{x}\abs{\rho(x)} \, dx < \infty \) is the weakest possible. One may consider instead asking about potentials such that \( \int_{\reals} \int_{\reals} \abs{x-y} \rho(x) \rho(y) \, dx \, dy < \infty \) but certainly the theory becomes considerably more complicated in this case: one may no longer define the potential \( V_{\rho} \), for example. An even more difficult case would be that in which both \( B[\rho]=\infty \) and, \(B[u^{2}]=\infty \), but \( B[u^{2}+\rho]<\infty \).

The author is also hopeful that the discussion of similar equations in two dimensions may benefit from these results, the large-scale behaviour being of a similar character (in that potentials for non-neutral charge distributions diverge at \(\infty\)), although the singularities involved make using the fundamental solution as an initial step a rather less attractive proposition. One might also ask about the existence of excited states, which the author is currently investigating using similar methods to \cite{Choquard:2008oq}.

\appendix

\section{Generalisation of Rellich's criterion}

\newcommand{\vx}{\underline{X}}
\newcommand{\vr}{\underline{R}}

\subsection{One Dimension}

We begin by considering \( u: \reals \to \complexes \). Write \( \vx = (x_{1},\dotsc,x_{n}) \), \( x_{i} \in \reals \).\footnote{We shall follow the notation of Lieb and Seiringer \cite{Lieb:2010nx}, although our vectors are somewhat less bold.} Write also \( \vx>0 \) for \( \bigwedge_{i=1}^{n} (x_{i}>0) \), and \( \abs{\vx}=(\abs{x_{1}},\dotsc,\abs{x_{n}}) \). Throughout we consider functionals of the form
\begin{equation}
	I_{G}[u] = \idotsint G(\vx) \prod_{i=1}^{n} \abs{u(x_{i})}^{2} \, d\vx;
\end{equation}
therefore it will be convenient to use the shorthand
\begin{equation}
	U(\vx) = \prod_{i=1}^{n} u(x_{i}),
\end{equation}
so that in particular \( \abs{U(\vx)}^{2} = \abs{u(x_{1})}^{2} \dotsm \abs{u(x_{n})}^{2} \).

We can see that \autoref{thm:minxytightness} can be generalised to any nonnegative function \( G(\vx) \) which satisfies one of the following three properties:
\begin{enumerate}
	\item
	\( G(\vx) \to \infty \) as \( \norm{\vx} \to \infty \) (i.e. \( \forall N \exists r_{N} \) such that \( G(\vx) > N \) when \(\norm{\vx}>r_{N}\)), where \( \norm{\vx} \) is a norm on \( \reals^{n} \).\footnote{Of course, any norm on \( \reals^{n} \) can be used since they are all equivalent.}
	\item
	\( G(x,\dotsc,x) \to \infty \) as \( \abs{x} \to \infty \) and \( G(\vx) \) and \( G(-\vx) \) are increasing for \( \vx>0 \) (i.e. if \( \vx>\vx' \), \( G(\vx) > G(\vx')  \)).
	\item
	There is an increasing function \( \vx(t) \) such that \( G( \pm \vx(t)) \to \infty \) as \( t \to \infty \) and \( G(\vx) \) and \( G(-\vx) \) are increasing for \( \vx>0 \).
\end{enumerate}

In particular,
\begin{lemma}
	Let \( G: \reals^{n} \to \reals^{+} \) satisfy one of the three conditions, and let \( u \in S := \{ u \in L^{2}(\reals): \norm{u}_{2}<1, I_{G}[u]<1 \} \). Then \( \int_{\abs{x}>r} \abs{u(x)}^{2} \, dx \to 0 \) as \( r \to \infty \), uniformly in \(S\).
\end{lemma}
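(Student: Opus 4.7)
The plan is to mirror the proof of Proposition~\ref{thm:minxytightness} almost verbatim, with the growth of $G$ at infinity taking the place of the specific \(\min\{\abs{x},\abs{y}\}\) integrand. I proceed by contradiction: if the uniform tightness fails, I can extract $(u_m) \subset S$ and $R_m \to \infty$ with $\int_{\abs{x} > R_m} \abs{u_m(x)}^2 \, dx \geq \alpha > 0$, and the aim is to show $I_G[u_m] \to \infty$, which contradicts $I_G[u_m] < 1$.

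First I split $\reals \setminus [-R_m, R_m]$ and apply the pigeonhole principle to select, for each $m$, a side (say the positive half-line; the other case is identical, using the symmetry between positive and negative orthants built into each of the three hypotheses on $G$) on which $u_m$ carries tail mass at least $\alpha/2$. Call this tail mass $P_m^+$. Restricting the $n$-fold integral defining $I_G[u_m]$ to the orthant where every $x_i > R_m$ and using nonnegativity of $G$ and $\abs{U}^2$, I obtain
\[
I_G[u_m] \geq M_m (P_m^+)^n \geq M_m (\alpha/2)^n, \qquad M_m := \inf\{G(\vx) : x_i > R_m,\ i=1,\dots,n\}.
\]
It then suffices to prove $M_m \to \infty$ under each of the three listed conditions on $G$.

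Condition 1 is immediate: on that orthant, $\norm{\vx} \geq c R_m$ for some constant $c > 0$ depending on the choice of norm, so $M_m \to \infty$ by the coercivity hypothesis. Condition 2 reduces to the monotonicity statement directly: componentwise monotonicity of $G$ on $\{\vx > 0\}$ attains the infimum at the corner, giving $M_m = G(R_m, \dots, R_m)$, which tends to infinity by the diagonal hypothesis.

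The delicate step, and hence the main obstacle, is condition 3, where the coercivity of $G$ is supplied only along a single increasing curve $\vx(t)$: I must transfer this growth to the infimum $M_m$ using monotonicity alone. My plan is to set $t_m := \sup\{ t : \vx(t) \leq (R_m, \dots, R_m) \text{ componentwise} \}$, arguing that $t_m \to \infty$ as $R_m \to \infty$ (any component of $\vx(t)$ that is bounded in $t$ automatically sits below $R_m$ for $m$ large, and any unbounded component enters $[0, R_m]$ only once $t$ is sufficiently large, so larger $R_m$ forces larger $t_m$; if $\vx$ is altogether bounded, $t_m$ may be taken arbitrarily large). Then $\vx(t_m) \leq (R_m, \dots, R_m)$ componentwise, and componentwise monotonicity of $G$ gives $M_m \geq G(\vx(t_m)) \to \infty$ by the hypothesis $G(\vx(t)) \to \infty$. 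In all three cases this yields $I_G[u_m] \to \infty$, contradicting $u_m \in S$ and establishing uniform tightness.
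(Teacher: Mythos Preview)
Your proof is correct and follows essentially the same route as the paper: restrict $I_G$ to a far orthant, use the coercivity or componentwise monotonicity of $G$ to bound the integrand below, and factor the resulting product integral into $n$ copies of the one-dimensional tail. The only cosmetic difference is packaging: the paper gives a direct forward estimate (for each $N$ it produces $x_N$ with $\int_{|x|>x_N}|u|^2 \leq C N^{-1/n}$ uniformly over $S$, handling the two half-lines simultaneously) rather than arguing by contradiction with a pigeonhole split, and for condition~3 it parametrises by $N$---choosing $r_N$ with $G(\vx(t))>N$ for $t>r_N$ and setting $x_N=\max_i x_i(r_N)$---which sidesteps the construction of your $t_m$.
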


The proofs of these are entirely analogous to those done before: for the first one, we have
\begin{align*}
	I_{G}[u] &\geq \idotsint_{\norm{\vx}>r_{N}} G(\vx) \abs{U(\vx)}^{2} d\vx \\
	&\geq N \idotsint_{\norm{\vx}>r_{N}} \prod_{i} \abs{u(x_{i})}^{2} d\vx \\
	&\geq N \prod_{i} \int_{\abs{x_{i}}>r_{N}} \abs{u(x_{i})}^{2} dx_{i} \\
	&= N \left( \int_{\abs{x}>r_{N}} \abs{u(x)}^{2} \, dx \right)^{n},
\end{align*}
and so the integral in the last line tends to zero uniformly in \( u \) as before.

The second one is a special case of the third with \( \vx(t) = (t,\dotsc,t) \), and the third can be done using much the same idea: let \( r_{N} \) be such that \( G(\pm\vx(t))>N \) for \( t>r_{N} \).
\begin{align*}
	I_{G}[u] &\geq \idotsint_{\vx'>\vx(r_{N})} G(\vx') \abs{U(\vx')}^{2} d\vx' + (-) \\
	&\geq G(\vx(r_{N})) \idotsint_{\vx'>\vx(r_{N})} \prod_{i} \abs{u(x'_{i})}^{2} d\vx' + (-) \\
	&\geq N \idotsint_{\vx'>\vx(r_{N})} \prod_{i} \abs{u(x'_{i})}^{2} d\vx' + (-) \\
	&\geq N \prod_{i} \int_{x'_{i}>x_{i}(r_{N})} \abs{u(x'_{i})}^{2} dx'_{i} + (-) \\
	&\geq N \left( \int_{x'>x_{N}} \abs{u(x')}^{2} \, dx' \right)^{n} + (-),
\end{align*}
where \( x_{N}=\max_{i}{\{ x_{i}(r_{N}) \}} \), and \((-)\) denotes the same term with \( \vx,\vx',r_{N},> \) replaced by their negatives.

There is another use of this result: the Rellich criterion also specifies control over the Fourier space; as above this is explained in the papers \cite{Pego:1985fk,HancheOlsen2010385}, by converting uniform convergence of \( f(\cdot-y) \to f \) into decay of the Fourier transform \(\tilde{f}\) of \(f\). We can then use the above lemma to provide the following partial extension of the Rellich criterion:

\begin{theorem}
\label{thm:Rellich1DGen}
	Let \( F: \reals^{n} \to \reals^{+} \), satisfy one of the three above conditions, and let \( G: \reals^{m} \to \reals^{+} \), satisfy one of the three above conditions. Then the set
	\begin{equation}
		\left\{ u \in L^{2}(\reals) : \norm{u}_{2}\leq 1, I_{F}[u] \leq 1, I_{G}[\tilde{u}]\leq 1 \right\}
	\end{equation}
	is compactly embedded in \( L^{2}(\reals) \).
\end{theorem}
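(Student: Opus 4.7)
The plan is to verify the three hypotheses of the Kolmogorov--Riesz--Pego compactness criterion (\autoref{thm:Pego-RK}) for the candidate set
\[
\mathcal{F} = \left\{ u \in L^{2}(\reals) : \norm{u}_{2}\leq 1,\ I_{F}[u] \leq 1,\ I_{G}[\tilde{u}]\leq 1 \right\}.
\]
The uniform $L^{2}$-bound is built into the definition; the remaining two conditions (uniform decay of position-space tails and of frequency-space tails) will be supplied by applying the preceding Lemma once to each of the two functionals, and the theorem will then be a structural packaging of these two inputs. The core observation is that $F$ controls $u$ while $G$ controls $\tilde{u}$, and Plancherel lets us pass between them.

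First I would apply the Lemma to the family $\mathcal{F}$ using $F$: since $\norm{u}_{2}\leq 1$ and $I_{F}[u] \leq 1$, one of the three stated growth conditions on $F$ together with the short tail-estimate argument immediately preceding the theorem yields
\[
\lim_{r \to \infty} \sup_{u \in \mathcal{F}} \int_{\abs{x}>r} \abs{u(x)}^{2} \, dx = 0.
\]
The Lemma is stated with strict inequalities $\norm{u}_{2}<1$ and $I_{F}[u]<1$, but the chain of bounds it relies on is purely quantitative in $I_{F}[u]$; the non-strict hypothesis here is handled by replacing $u$ by $(1+\varepsilon)^{-1}u$, applying the Lemma, and letting $\varepsilon \to 0^{+}$.

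Next, to obtain Fourier-space decay, I invoke Plancherel's theorem: $u \mapsto \tilde{u}$ is an $L^{2}$-isometry, so $\norm{\tilde{u}}_{2} = \norm{u}_{2}\leq 1$, and by hypothesis $I_{G}[\tilde{u}]\leq 1$. Hence the family $\{\tilde{u} : u \in \mathcal{F}\}$ also satisfies the Lemma's hypotheses, this time with the functional $G$, giving
\[
\lim_{r \to \infty} \sup_{u \in \mathcal{F}} \int_{\abs{k}>r} \abs{\tilde{u}(k)}^{2} \, dk = 0.
\]
With all three conditions of \autoref{thm:Pego-RK} now verified, $\mathcal{F}$ is totally bounded in $L^{2}(\reals)$, and completeness upgrades this to relative compactness, which is the compact embedding asserted.

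The only real obstacle is cosmetic (reconciling the non-strict versus strict inequalities in the Lemma, handled by a trivial $\varepsilon$-rescaling). No fundamentally new analysis is needed: the theorem is essentially the observation that the one-variable tightness result behind the Lemma, applied symmetrically on the spatial and Fourier sides, slots directly into the hypotheses of the Pego form of the Kolmogorov--Riesz theorem.
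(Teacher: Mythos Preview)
Your proposal is correct and matches the paper's approach exactly: the paper does not give an explicit proof of this theorem, but the paragraph preceding it makes clear that the intended argument is precisely to apply the Lemma once on the spatial side (via $F$) and once on the Fourier side (via $G$ and Plancherel), then feed both uniform-decay statements into the Pego form of the Kolmogorov--Riesz criterion (\autoref{thm:Pego-RK}). Your handling of the strict-versus-non-strict inequalities by an $\varepsilon$-rescaling is a harmless cosmetic fix, as you say.
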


\subsection{\texorpdfstring{\(d\)}{d} Dimensions}
\renewcommand{\vr}{\mathbf{r}}

We provide only a very simple generalisation, which shall prove sufficient for our purposes.

\begin{lemma}
	Let \( \vx = (x_{1},x_{2},\dotsc,x_{n}) \in (\reals^{d})^{n} \), define \( \vr = \abs{\vx} \), suppose \( G(\vx) = g(\abs{\vx}) \) is componentwise-radial, and let \( g: (\reals^{+})^{n} \to \reals^{+} \) satisfy one of the three conditions from the previous section. Let \( u \in S := \{ u \in L^{2}(\reals^{d}): \norm{u}_{2}<1, I_{G}[u] < 1 \} \). Then \( \int_{\abs{x}>r} \abs{u(x)}^{2} \to 0 \) as \( r \to \infty \), uniformly in \(S\).
\end{lemma}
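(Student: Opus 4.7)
The plan is to mirror the one-dimensional argument from the previous subsection almost verbatim, exploiting the componentwise-radial hypothesis so that every $d$-dimensional integral reduces to one over the scalar magnitudes $|x_i|$, where $|\cdot|$ now denotes the Euclidean norm on $\reals^d$ rather than the absolute value on $\reals$. The factorisation $|U(\vx)|^{2} = \prod_{i=1}^{n} |u(x_i)|^{2}$ is unchanged, so the whole machinery rests on lining it up with a product integration domain in $(\reals^d)^n$.

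For case (i), given $N > 0$ I would pick $r_N$ such that $g(\vr) > N$ whenever $\|\vr\| > r_N$. The product region $\{\vx : |x_i| > r_N \text{ for all } i\}$ is contained in $\{\vx : \|\vr\| > r_N\}$, and so
\[
    I_{G}[u] \geq N \prod_{i=1}^{n} \int_{|x_{i}|>r_{N}} |u(x_{i})|^{2} \, dx_{i} = N \left( \int_{|x|>r_{N}} |u(x)|^{2} \, dx \right)^{n}.
\]
Since $I_{G}[u] < 1$, this gives $\int_{|x|>r_{N}} |u|^{2} < N^{-1/n}$ uniformly in $u \in S$, which tends to $0$ as $N \to \infty$. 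Cases (ii) and (iii) would be handled analogously, using componentwise monotonicity of $g$ on $(\reals^{+})^{n}$: for (iii), take $r_{N}$ with $g(\vr(r_{N})) > N$, set $R_{N} = \max_{i} x_{i}(r_{N})$, and observe that on $\{|x_{i}| > R_{N} \text{ for all } i\}$ we have $|\vx| > \vr(r_{N})$ componentwise, whence $g(|\vx|) \geq g(\vr(r_{N})) > N$; the same product decomposition then applies. Case (ii) is simply the special case $\vr(t) = (t,\dotsc,t)$.

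The main obstacle is essentially nonexistent: the componentwise-radial hypothesis is designed precisely so that the factorisation of $|U(\vx)|^{2}$ aligns with a product integration domain, allowing the scalar $d$-dimensional magnitudes $|x_{i}|$ to play exactly the role that $|x_{i}|$ played in one dimension. Dropping this hypothesis would force one to grapple with the genuine $d$-dimensional angular geometry within each $\reals^{d}$ factor and require a substantially different argument; but under the stated hypotheses the proof is a direct translation of the one-dimensional case, so no new ideas are required.
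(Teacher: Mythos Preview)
Your argument is correct, but it takes a somewhat different route from the paper. You re-run the one-dimensional estimates directly in $(\reals^{d})^{n}$, using the componentwise-radial hypothesis only to bound $G$ from below on product regions $\{\,|x_{i}|>R\ \text{for all }i\,\}$ and then factoring $\prod_{i}\int_{|x_{i}|>R}|u(x_{i})|^{2}\,dx_{i}$. The paper instead performs a genuine reduction to the one-dimensional lemma: it passes to spherical coordinates in each $\reals^{d}$ factor, defines
\[
|\bar u(r)|^{2}=r^{d-1}\int_{\|n\|_{2}=1}|u(rn)|^{2}\,dn,
\]
and observes that $I_{G}[u]=\int_{(\reals^{+})^{n}} g(\vr)\prod_{i}|\bar u(r_{i})|^{2}\,d\vr$ is literally a one-dimensional $I_{g}[\bar u]$; the 1D lemma then gives tightness of $\bar u$, and $\int_{r>R}|\bar u(r)|^{2}\,dr=\int_{|x|>R}|u(x)|^{2}\,dx$ finishes. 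Your approach is slightly more elementary (no coordinate change, no auxiliary function), while the paper's has the structural advantage of invoking the 1D result as a black box rather than repeating its proof; it also makes transparent why only the ``positive'' half of conditions (ii)/(iii) is needed here, since $\bar u$ lives on $\reals^{+}$ from the outset.
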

(We may extend \( g \) to negative numbers by imposing that it is even, although this is unimportant in the proof)

\begin{proof}
	We have, by Tonelli's theorem
	\begin{align*}
		I_{G}[u] &= \int_{\reals^{d}} \dotsi \int_{\reals^{d}} G(\vx) \abs{U(\vx)}^{2} \, d\vx \\
		&= \int_{\abs{\vx}>0} g(\abs{\vx}) \prod_{i} r_{i}^{d-1} \int_{\norm{n_{i}}_{2}=1} \abs{u(r_{i}n_{i})}^{2} dn_{i} d\vr \\
		&= \int_{\abs{\vx}>0} g(\abs{\vx}) \prod_{i} \abs{\bar{u}(r_{i})}^{2} d\vr,
	\end{align*}
	where we define
	\begin{equation}
		\abs{\bar{u}(r)}^{2} = r^{d-1}\int_{\norm{n}_{2}=1} \abs{u(rn)}^{2} \, dn,
	\end{equation}
	proportional to the spherical mean of \(\abs{u}^{2}\). But then \(I_{G}[u]\) is in the same form as the one-dimensional case, with \( \bar{u} \) serving as the \(L^{2}\) function. Hence the previous proof applies, and shows that
	\begin{equation*}
		\lim_{R \to \infty} \int_{r>R} \abs{\bar{u}(r)}^{2} \, dr = 0.
	\end{equation*}
	But
	\begin{equation*}
		\int_{r>R} \abs{\bar{u}(r)}^{2} \, dr = \int_{\abs{x}>R} \abs{u(x)}^{2} \, dx,
	\end{equation*}
	and so we obtain the desired result.
\end{proof}

We then can obtain the obvious extension of \autoref{thm:Rellich1DGen}, where we replace \( L^{2}(\reals) \) by \( L^{2}(\reals^{d}) \).

\begin{remark}
	Although we used \( L^{2} \) here, this is obviously not essential: one can extend the results to \( L^{p} \) and \( L^{q} \), where \( p^{-1}+q^{-1}=1 \); the proof is effectively identical, although the underlying theory is slightly different.
\end{remark}

\section*{Acknowledgements}

The author would like to thank David Stuart for many helpful discussions over the course of this paper's gestation.

\bibliographystyle{spmpsci}
\bibliography{1DNC}

\begin{thebibliography}{10}
\providecommand{\url}[1]{{#1}}
\providecommand{\urlprefix}{URL }
\expandafter\ifx\csname urlstyle\endcsname\relax
  \providecommand{\doi}[1]{DOI~\discretionary{}{}{}#1}\else
  \providecommand{\doi}{DOI~\discretionary{}{}{}\begingroup
  \urlstyle{rm}\Url}\fi

\bibitem{Ambrosetti:2007db}
Ambrosetti, A., Malchiodi, A.: Nonlinear Analysis and Semilinear Elliptic
  Problems.
\newblock Cambridge University Press (2007)

\bibitem{fortunato1998eigenvalue}
Benci, V., Fortunato, D.: {An eigenvalue problem for the Schr\"odinger--Maxwell
  equations}.
\newblock Topol. Methods Nonlinear Anal. \textbf{11}(2), 283--293 (1998)

\bibitem{Choquard:2007fk}
Choquard, P., Stubbe, J.: {The one-dimensional Schr\"odinger--Newton
  equations}.
\newblock Lett. Math. Phys. \textbf{81}(2), 177--184 (2007)

\bibitem{Choquard:2008oq}
Choquard, P., Stubbe, J., Vuffray, M.: {Stationary solutions of the
  Schr\"odinger-Newton model---an ODE approach}.
\newblock Differential Integral Equations \textbf{21}(7-8), 665--679 (2008)

\bibitem{Coclite03solitarywaves}
Coclite, G.M., Georgiev, V.: {Solitary waves for Maxwell-Schr\"odinger
  equations}.
\newblock Electron. J. Differential Equations \textbf{2004}, 1--31 (2003)

\bibitem{HancheOlsen2010385}
Hanche-Olsen, H., Holden, H.: {The Kolmogorov--Riesz compactness theorem}.
\newblock Expo. Math. \textbf{28}(4), 385--394 (2010)

\bibitem{HLP:1952}
Hardy, G.H., Littlewood, J.E., P\'olya, G.: Inequalities, 2nd edn.
\newblock Cambridge University Press (1952)

\bibitem{Hartree:1928uq}
Hartree, D.R.: The wave mechanics of an atom with a non-{C}oulomb central
  field. {Part I. Theory} and methods.
\newblock Math. Proc. Cambridge Philos. Soc. \textbf{24}, 89--110 (1928)

\bibitem{Korner:2004fk}
K\"orner, T.W.: A Companion to Analysis, \emph{Graduate Studies in
  Mathematics}, vol.~62.
\newblock AMS (2004)

\bibitem{Lieb:1977uq}
Lieb, E.H.: {Existence and uniqueness of the minimizing solution of Choquard's
  nonlinear equation}.
\newblock Stud. Appl. Math. \textbf{57}, 93--105 (1977)

\bibitem{lieb2001analysis}
Lieb, E.H., Loss, M.: Analysis, \emph{Graduate Studies in Mathematics},
  vol.~14, 2nd edn.
\newblock AMS (2001)

\bibitem{Lieb:2010nx}
Lieb, E.H., Seiringer, R.: The Stability of Matter in Quantum Mechanics.
\newblock Cambridge University Press (2010)

\bibitem{Lions19811245}
Lions, P.L.: {Some remarks on Hartree equation}.
\newblock Nonlinear Anal. \textbf{5}(11), 1245--1256 (1981)

\bibitem{Lions:1984rw}
Lions, P.L.: {The concentration-compactness principle in the calculus of
  variations. The locally compact case, part 1}.
\newblock Ann Inst. H. Poincar\'e. Anal. Non Lin\'eaire \textbf{1}(2), 109--145
  (1984)

\bibitem{Lions:1984dq}
Lions, P.L.: {The concentration-compactness principle in the calculus of
  variations. The locally compact case, part 2}.
\newblock Ann Inst. H. Poincar\'e. Anal. Non Lin\'eaire \textbf{1}(4), 223--283
  (1984)

\bibitem{Liu:1969ly}
Liu, T.S., Rooij, A.V.: Sums and intersections of normed linear spaces.
\newblock Math. Nachr. \textbf{42}(1-3), 29--42 (1969)

\bibitem{Manfredi:1994kx}
Manfredi, G., Feix, M.: Transport equations for quantum plasmas.
\newblock In: B.~Perthame (ed.) Advances in Kinetic Theory and Computing:
  Selected Papers, vol.~22, pp. 109--140. World Scientific (1994)

\bibitem{Pego:1985fk}
Pego, R.L.: {Compactness in {\(L^2\)} and the Fourier transform}.
\newblock Proc. Amer. Math. Soc. \textbf{95}(2), 252--254 (1985)

\bibitem{Pietsch:2007zr}
Pietsch, A.: History of Banach Spaces and Linear Operators.
\newblock Birkh\"auser (2007)

\bibitem{10.2307/2371912}
P\'olya, G., Szeg\H{o}, G.: Inequalities for the capacity of a condenser.
\newblock Amer. J. of Math. \textbf{67}(1), 1--32 (1945)

\bibitem{reed1978analysis}
Reed, M., Simon, B.: Methods of Modern Mathematical Physics, vol. 4, {A}nalysis
  of Operators.
\newblock Academic Press (1978)

\bibitem{rossi2011theory}
Rossi, F.: Theory of Semiconductor Quantum Devices: Microscopic Modeling and
  Simulation Strategies.
\newblock Springer Berlin Heidelberg (2011)

\end{thebibliography}

\end{document}